\documentclass{amsart}
\usepackage{amssymb}
\usepackage{xcolor}
\usepackage{bbm}

\theoremstyle{plain}
\newtheorem{theorem}{Theorem}
\newtheorem{lemma}[theorem]{Lemma}

\theoremstyle{definition}

\theoremstyle{remark}
\newtheorem{remark}[theorem]{Remark}
\newtheorem{example}[theorem]{Example}

\providecommand{\abs}[1]{\lvert#1\rvert}
\providecommand{\Abs}[1]{\Bigl\lvert#1\Bigr\rvert}
\providecommand{\norm}[1]{\lVert#1\rVert}

% i seguenti pacchetti servono per mettere più grafico all'interno della stessa figura
\usepackage{graphicx}
\usepackage{wrapfig,lipsum}
\usepackage{multirow}
\usepackage{subcaption}
\usepackage{afterpage}
\usepackage{float}
\usepackage{multicol}
\usepackage{algorithm2e}
\usepackage[colorlinks=true, linkcolor=blue, citecolor=blue, urlcolor=blue]{hyperref}
\usepackage{array}
\usepackage{caption}

\setlength\parindent{0pt} % rimuove l'indentazione automaticamente senza dover scrivere \noindent all'inizio di ogni paragrafo

\newcommand{\bs}{\boldsymbol}

\begin{document}

\title[Asymptotics of predictive distributions]{Asymptotics of predictive distributions driven by sample means and variances}
\author{Samuele Garelli}
\address{Samuele Garelli, Dipartimento di Scienze Statistiche ``P. Fortunati'', Universit\`a di Bologna, via delle Belle Arti 41, 40126 Bologna, Italy}
\email{samuele.garelli2@unibo.it}
\author{Fabrizio Leisen}
\address{Fabrizio Leisen, Department of Mathematics, King’s College, Strand WC2R 2LS, London, UK}
\email{fabrizio.leisen@gmail.com}
\author{Luca Pratelli}
\address{Luca Pratelli, Accademia Navale, viale Italia 72, 57100 Livorno,
Italy} \email{luca{\_}pratelli@marina.difesa.it}
\author{Pietro Rigo}
\address{Pietro Rigo (corresponding author), Dipartimento di Scienze Statistiche ``P. Fortunati'', Universit\`a di Bologna, via delle Belle Arti 41, 40126 Bologna, Italy}
\email{pietro.rigo@unibo.it}
\keywords{Bayesian predictive inference, Conditional identity in distribution, Convergence of probability measures, Predictive distribution, Predictive resampling, Total variation distance}
\subjclass[2020]{60B10, 60G57, 62F15, 62E20}

\begin{abstract}
Let $\alpha_n(\cdot)=P\bigl(X_{n+1}\in\cdot\mid X_1,\ldots,X_n\bigr)$ be the predictive distributions of a sequence $(X_1,X_2,\ldots)$ of $p$-dimensional random vectors. Suppose
$$\alpha_n=\mathcal{N}(M_n,Q_n)$$
where $M_n=\frac{1}{n}\sum_{i=1}^nX_i$ and $Q_n=\frac{1}{n}\sum_{i=1}^n(X_i-M_n)(X_i-M_n)^t$. Then, there is a random probability measure $\alpha$ on the Borel subsets of $\mathbb{R}^p$ such that $\norm{\alpha_n-\alpha}\overset{a.s.}\longrightarrow 0$ where $\norm{\cdot}$ is total variation distance. An explicit expression for $\alpha$ is provided and the convergence rate of $\norm{\alpha_n-\alpha}$ is shown to be arbitrarily close to $n^{-1/2}$. Moreover, it is still true that $\norm{\alpha_n-\alpha}\overset{a.s.}\longrightarrow 0$ even if $\alpha_n=\mathcal{L}(M_n,Q_n)$ where $\mathcal{L}$ belongs to a class of distributions much larger than the normal. The predictives $\alpha_n$ are useful in various frameworks, including Bayesian predictive inference and predictive resampling. Finally, the asymptotic behavior of copula-based predictive distributions (introduced in \cite{HMW18}) is investigated and a numerical experiment is performed.
\end{abstract}

\maketitle

\section{introduction}\label{intro}

\noindent Let $X=(X_1,X_2,\ldots)$ be a sequence of random variables, with values in a standard Borel space $(S,\mathcal{B})$, and
$$X(n)=(X_1,\ldots,X_n).$$
The {\em predictive distributions} of $X$ are
$$\alpha_0(\cdot)=P(X_1\in\cdot)\quad\text{and}\quad \alpha_n(\cdot)=P\bigl[X_{n+1}\in\cdot\mid X(n)\bigr]\quad\text{for each }n\ge 1.$$
Thus, $\alpha_0$ is the marginal distribution of $X_1$ and $\alpha_n$ the conditional distribution of $X_{n+1}$ given $X(n)$. Note that $\alpha_0$ is a fixed probability measure on $\mathcal{B}$ while $\alpha_n$ is a random probability measure on $\mathcal{B}$ for $n\ge 1$.

\medskip

\noindent By the Ionescu-Tulcea theorem, in order to assess the probability distribution of $X$, it suffices to specify the sequence $(\alpha_n)$; see e.g. \cite{STS23}. Precisely, suppose the $X_n$ are defined on a set $\Omega$ and $X(\Omega)=S^\infty$. (The condition $X(\Omega)=S^\infty$ is just to avoid unnecessary complications; it is automatically true if $\Omega=S^\infty$ and the $X_n$ are the canonical projections). Select any sequence $(\alpha_n:n\ge 0)$ such that:

\medskip

\begin{itemize}

\item[(i)] $\alpha_0\in\mathcal{P}$ and $\alpha_n:\Omega\rightarrow\mathcal{P}$ for $n\ge 1$, where $\mathcal{P}$ denotes the collection of all probability measures on $\mathcal{B}$;

\item[(ii)] The map $\omega\in\Omega\mapsto\alpha_n(\omega,A)$ is measurable with respect to $\sigma\bigl[X(n)\bigr]$ for fixed $n\ge 1$ and $A\in\mathcal{B}$.

\end{itemize}

\medskip

\noindent Then, there is a unique probability measure $P$ on $\sigma(X)$ which admits the $\alpha_n$ as predictive distributions. Importantly, apart from conditions (i)-(ii), the $\alpha_n$ are {\em arbitrary}. Roughly speaking, to assign the distribution of $X$, it suffices to select (arbitrarily) the marginal distribution of $X_1$, the conditional distribution of $X_2$ given $X_1$, the conditional distribution of $X_3$ given $(X_1,X_2)$, and so on. This fact has various consequences.

\medskip

\noindent For instance, consider a Bayesian forecaster who aims to predict $X_{n+1}$ based on $X(n)$ for every $n$. To this end, it is (necessary and) sufficient to select $(\alpha_n)$. This suggests a few remarks.

\begin{itemize}

\item To make Bayesian predictions, the data sequence $X$ is not forced to have any specific distributional form (such as exchangeable, stationary, Markov, and so on).

\item However, in the special case where $X$ is required to be exchangeable, it is possible to make predictions without explicitly selecting a prior. In fact, in the exchangeable case, selecting a prior or selecting the predictives $(\alpha_n)$ are two equivalent strategies to determine the distribution of $X$. Sometimes, the second strategy (i.e. assigning $(\alpha_n)$ directly) is more convenient.

\item By choosing $(\alpha_n)$, the forecaster is attaching probabilities to observable facts only. The value of $X_{n+1}$ is actually
observable, while the prior (being a probability on some random parameter) does not necessarily deal with observable facts.

\item In some Bayesian problems, even if the primary goal is not prediction, it may be convenient to assign the distribution of $X$ through the choice of $(\alpha_n)$. This happens mainly in Bayesian nonparametrics.

\end{itemize}

\medskip

\noindent In a nutshell, these are the basic ideas underlying the predictive approach to Bayesian inference; see \cite{BERN21}, \cite{STATSIN23}, \cite{STS23} and references therein.

\medskip

\noindent Finally, in addition to Bayesian predictive inference, there are other frameworks where the choice of $(\alpha_n)$ is the main task. Without any claim of being exhaustive, we mention machine learning \cite{CFZ}, \cite{DSGS}, \cite{EFRON}, \cite{HT}, causal inference \cite{LDM}, species sampling sequences \cite{PIT1996}, \cite{PITYOR}, Dawid's prequential approach \cite{DAW}, \cite{DAWVOVK}, and martingale posterior distributions \cite{FHW23}, \cite{HMW18}.

\subsection{Convergence of predictive distributions}\label{x4rg7h}

After selecting the sequence $(\alpha_n)$ of predictives, a natural question is whether it converges, in some sense, as $n\rightarrow\infty$. To formalize this question, suppose $X$ is defined on the probability space $(\Omega,\sigma(X),P)$ and $\alpha$ is a random probability measure (r.p.m.) on $\mathcal{B}$ defined on such a probability space. Recall also that the {\em total variation distance} between two probability measures on $\mathcal{B}$, say $\mu$ and $\nu$, is
$$\norm{\mu-\nu}=\sup_{A\in\mathcal{B}}\,\abs{\mu(A)-\nu(A)}.$$

\medskip

\noindent Then, $\alpha_n${\em converges weakly to $\alpha$ a.s.} if
\begin{gather*}
\alpha_n(\omega,\cdot)\overset{weakly}\longrightarrow\alpha(\omega,\cdot),\quad\text{as }n\rightarrow\infty,\text{ for }P\text{-almost all }\omega\in\Omega.
\end{gather*}
Similarly, $\alpha_n$ {\em converges in total variation to $\alpha$ a.s.} if
\begin{gather*}
\norm{\alpha_n(\omega,\cdot)-\alpha(\omega,\cdot)}\longrightarrow 0,\quad\text{as }n\rightarrow\infty,\text{ for }P\text{-almost all }\omega\in\Omega.
\end{gather*}
Clearly, $\alpha_n\rightarrow\alpha$ in total variation a.s. implies  $\alpha_n\rightarrow\alpha$ weakly a.s. but not conversely.

\medskip

\noindent Usually, convergence of $\alpha_n$ is a desirable property. This claim can be supported by various examples and remarks. Here, we report three of them.

\medskip

\begin{example}
Suppose $\alpha_n\rightarrow\alpha$ weakly a.s. for some r.p.m. $\alpha$ on $\mathcal{B}$. Then, $\mu_n\rightarrow\alpha$ weakly a.s. where $\mu_n=\frac{1}{n}\,\sum_{i=1}^n\delta_{X_i}$ is the empirical measure. Moreover, $X$ is asymptotically exchangeable, in the sense that $(X_n,X_{n+1},\ldots)\overset{dist}\longrightarrow Z$, as $n\rightarrow\infty$, for some exchangeable sequence $Z=(Z_1,Z_2,\ldots)$. As an aside, exploiting convergence of $(\alpha_n)$, one obtains the following characterization of exchangeability
$$X\text{ exchangeable}\quad\Longleftrightarrow\quad X\text{ stationary and }(\alpha_n)\text{ converges weakly a.s.}$$
In fact, ``$\Rightarrow$" is very well known; see e.g. \cite{AOP13}. As to ``$\Leftarrow$", just note that stationarity of $X$ implies $X\sim (X_n,X_{n+1},\ldots)$ for each $n$. Since $(\alpha_n)$ converges weakly a.s., one also obtains $(X_n,X_{n+1},\ldots)\overset{dist}\longrightarrow Z$ for some exchangeable $Z$. Hence, $X\sim Z$.
\end{example}

\medskip

\begin{example}\label{u8j4z2q1} Fix a $\sigma$-finite measure $\lambda$ on $\mathcal{B}$ and suppose that $X(n)$ has a density with respect to $\lambda^n$ for each $n$. Suppose also that $X$ is exchangeable or, more generally, {\em conditionally identically distributed} (in the sense of \cite{AOP04}). In this case, there is a r.p.m. $\alpha$ on $\mathcal{B}$ such that $\alpha_n\rightarrow\alpha$ weakly a.s. and an obvious question is whether $\alpha$ still admits a density with respect to $\lambda$. Precisely, this means that
\begin{gather}\label{crf}
\alpha(\omega,dx)=f(\omega,x)\,\lambda(dx)
\end{gather}
for some non-negative measurable function $f$ and $P$-almost all $\omega\in\Omega$. By Theorem 1 of \cite{AOP13}, the answer is
$$\text{Condition \eqref{crf} holds}\quad\Longleftrightarrow\quad \alpha_n\rightarrow\alpha\text{ in total variation a.s.}$$
\end{example}

\medskip

\begin{example}\label{g7m8uj} A new method for making Bayesian inference, referred to as {\em predictive resampling}, has been introduced by Fong, Holmes and Walker in \cite{FHW23}. A quick description of this method is provided in Section \ref{g7uj1}. Here, we just note that a fundamental step of predictive resampling is {\em the choice of a converging sequence $(\alpha_n)$ of predictive distributions}. Indeed, in a specific problem, predictive resampling is more or less effective depending on whether the choice of $(\alpha_n)$ is more or less suitable to the available data. In \cite{FHW23}, convergence of $(\alpha_n)$ is proved via martingale arguments and the corresponding posteriors are called martingale posteriors. However, martingales are not mandatory. In order to apply predictive resampling, it is enough to select $(\alpha_n)$ and to show that it converges (in some sense).
\end{example}

\subsection{Our contribution}\label{x6yh9}

\noindent This paper introduces and investigates a new sequence $(\alpha_n)$ of predictive distributions for the case
$$S=\mathbb{R}^p.$$
The $\alpha_n$ are specified in Section \ref{r4es3}. Here, we just note that each $\alpha_n$ depends on the data $X(n)$ only through the sample mean and covariance matrix. A meaningful example is
$$\alpha_n=\mathcal{N}(M_n,Q_n)$$
where $M_n=\frac{1}{n}\,\sum_{i=1}^nX_i$, $Q_n=\frac{1}{n}\,\sum_{i=1}^n(X_i-M_n)(X_i-M_n)^t$ and $\mathcal{N}(a,B)$ denotes the $p$-dimensional Gaussian distribution with mean $a$ and covariance matrix $B$.

\medskip

\noindent Our main result is that $\alpha_n$ converges, in total variation a.s., to a r.p.m. $\alpha$. An explicit formula for $\alpha$ is provided and the rate of convergence is shown to be arbitrarily close to $n^{-1/2}$. Moreover, it is still true that $\norm{\alpha_n-\alpha}\overset{a.s.}\longrightarrow 0$ even if $\alpha_n=\mathcal{L}(M_n,Q_n)$ where $\mathcal{L}$ belongs to a class of distributions much larger than $\mathcal{N}$.

\medskip

\noindent In case $S=\mathbb{R}$, a natural competitor to the previous $\alpha_n$ are the copula-based predictive distributions, introduced in \cite{HMW18} and denoted by $\beta_n$ in this paper. Hence, the asymptotic behavior of $\beta_n$ is investigated as well. In Section \ref{x6h8k1q}, it is shown that $\beta_n$ converges weakly a.s. but not necessarily in total variation a.s. Moreover, conditions for $\beta_n$ to converge in total variation a.s. are provided.

\medskip

\noindent Finally, in Section \ref{ane456}, the empirical behavior of the $\alpha_n$ is tested via a numerical experiment. In particular, $\alpha_n$ is compared with $\beta_n$ as regards the speed of convergence. Moreover, both $\alpha_n$ and $\beta_n$ are used to implement the predictive resampling method for parameter estimation.

\medskip

\noindent A last remark is that, to support the $\alpha_n$, we focused mainly on their behaviour in parameter estimation via predictive resampling. However, the $\alpha_n$'s scope is much larger. As an obvious example, in Bayesian predictive inference, the $\alpha_n$ may be used to predict future observations.

\medskip

\section{Results}\label{c5rsd3}
\noindent This section includes our main results. Any consideration regarding their practical application is postponed to Section \ref{ane456}. Similarly, to make the paper more readable, all the proofs are deferred to a final Appendix.

\subsection{Preliminaries and notation}
From now on, $S=\mathbb{R}^p$ where $p\ge 1$. Each point $x\in\mathbb{R}^p$ is regarded as a column vector and $x^{(i)}$ denotes the $i$-th coordinate of $x$. Similarly, if $B$ is any matrix, $B^{(i,j)}$ is the $(i,j)$-th entry of $B$. We denote by $\mathcal{M}$ the collection of symmetric positive definite matrices of order $p\times p$. We write $\mathcal{L}(a,B)$ to indicate the probability distribution of $a+B^{1/2}Z$, where $a\in\mathbb{R}^p$, $B$ is a symmetric non-negative definite matrix of order $p\times p$, and
$Z$ is a $p$-dimensional random vector such that
\begin{gather*}
E(Z)=0,\quad E(ZZ^t)=I,\quad E\bigl\{(Z^tZ)^{u/2}\bigr\}<\infty\quad\text{for some }u>2,\text{ and}
\\Z\,\text{ has a density with respect to Lebesgue measure on }\mathbb{R}^p.
\end{gather*}
In the previous notation, $p$ is understood. We will write $\mathcal{M}_p$ or $\mathcal{L}_p(a,B)$ instead of $\mathcal{M}$ or $\mathcal{L}(a,B)$ every time that a mention of $p$ is necessary.

\medskip

\noindent Finally, given any sequence $(\alpha_n)$ of predictive distributions, convergence is always meant with respect to the probability measure $P$ under which $X$ has predictives $\alpha_n$. As noted in Section \ref{intro}, such a $P$ exists and is unique by the Ionescu-Tulcea theorem.

\medskip

\subsection{Definition and asymptotics of the new predictive distributions}\label{r4es3}
Let
\begin{gather*}
M_n=\frac{1}{n}\,\sum_{i=1}^nX_i,\quad\quad Q_n=\frac{1}{n}\,\sum_{i=1}^n(X_i-M_n)(X_i-M_n)^t,
\end{gather*}
and
\begin{gather}\label{cf332wq1}
\alpha_0=\mathcal{L}(0,I),\quad\alpha_1=\mathcal{L}(X_1,I)\quad\text{and}\quad\alpha_n=\mathcal{L}(M_n,\,Q_n)\quad\text{for }n\ge 2.
\end{gather}

\medskip

\noindent The predictives $\alpha_n$ have a simple form and can be easily handled in real problems. Among other things, they can serve in prediction problems and/or in applying the predictive resampling method; see Section \ref{ane456}. In addition, the $\alpha_n$ have a nice asymptotic behavior. This fact is formalized by the next two results, which are the main contributions of this paper.

\begin{theorem}\label{q2098nfer}
If the $\alpha_n$ are defined by \eqref{cf332wq1}, then $M_n\overset{a.s.}\longrightarrow M$ and $Q_n\overset{a.s.}\longrightarrow Q$ where $M$ is a $p$-dimensional random vector and $Q$ a random matrix of order $p\times p$. In addition, $Q\in\mathcal{M}$ a.s., so that
\begin{gather*}
\norm{\alpha_n-\alpha}\overset{a.s.}\longrightarrow 0\quad\quad\text{where }\alpha=\mathcal{L}(M,Q).
\end{gather*}
\end{theorem}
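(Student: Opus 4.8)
The plan is to uncover a hidden martingale structure and then read off convergence from it. First I would represent the sequence through its innovations: by the Ionescu--Tulcea construction one may realize $X_{n+1}=M_n+Q_n^{1/2}Z_{n+1}$ (with the convention $Q_0=Q_1=I$), where the $Z_n$ are i.i.d.\ copies of $Z$ and $Z_{n+1}$ is independent of $\mathcal{F}_n:=\sigma\bigl(X(n)\bigr)$. Since $\alpha_n=\mathcal{L}(M_n,Q_n)$ has mean $M_n$ and second--moment matrix $S_n:=\frac1n\sum_{i=1}^nX_iX_i^t=M_nM_n^t+Q_n$, this gives $E\bigl[X_{n+1}\mid\mathcal{F}_n\bigr]=M_n$ and $E\bigl[X_{n+1}X_{n+1}^t\mid\mathcal{F}_n\bigr]=S_n$. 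Combined with the recursions $M_{n+1}=M_n+\frac{1}{n+1}Q_n^{1/2}Z_{n+1}$ and $S_{n+1}=\frac{nS_n+X_{n+1}X_{n+1}^t}{n+1}$, these identities show that $M_n$ and every quadratic form $v^tS_nv$ (for $v\in\mathbb{R}^p$) are martingales for $n\ge 2$.

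Next I would deduce convergence of $M_n$, $S_n$ and hence $Q_n$. The trace $\mathrm{tr}(S_n)=\frac1n\sum_i\norm{X_i}^2$ is a nonnegative martingale with $E\,\mathrm{tr}(S_n)$ constant in $n$ (for $n\ge2$), so it is $L^1$--bounded; since $\abs{v^tS_nv}\le\norm{v}^2\,\mathrm{tr}(S_n)$, each $v^tS_nv$ is an $L^1$--bounded martingale and converges a.s., whence $S_n\to S$ a.s.\ by polarization. Likewise the increments of $M_n$ have conditional covariance $\frac{Q_n}{(n+1)^2}$, giving $\sum_nE\,\mathrm{tr}\bigl(\frac{Q_n}{(n+1)^2}\bigr)<\infty$; thus $M_n$ is $L^2$--bounded and $M_n\to M$ a.s. Therefore $Q_n=S_n-M_nM_n^t\to Q:=S-MM^t$ a.s., and $Q$ is non-negative definite as an a.s.\ limit of sample covariance matrices.

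The crux is to upgrade this to $Q\in\mathcal{M}$ a.s., i.e.\ positive definiteness. Here I would track the determinant. Writing $A_n=nQ_n$ for the scatter matrix, the rank-one update $A_{n+1}=A_n+\frac{n}{n+1}w_{n+1}w_{n+1}^t$ with $w_{n+1}=X_{n+1}-M_n=Q_n^{1/2}Z_{n+1}$ and the matrix determinant lemma yield, once $A_n$ is nonsingular, $\det A_{n+1}=\det A_n\bigl(1+\frac{1}{n+1}\norm{Z_{n+1}}^2\bigr)$, so that $R_n:=\det Q_n$ obeys $R_{n+1}=R_n\bigl(\frac{n}{n+1}\bigr)^p\bigl(1+\frac{\norm{Z_{n+1}}^2}{n+1}\bigr)$. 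Taking logarithms, $\log R_{n+1}-\log R_n=p\log\frac{n}{n+1}+\log\bigl(1+\frac{\norm{Z_{n+1}}^2}{n+1}\bigr)$; since $E\norm{Z}^2=p$, the conditional means telescope to a summable $O(n^{-2})$ drift, while the fluctuations are controlled by truncating $\norm{Z_{n+1}}^2$ and invoking the conditional Borel--Cantelli lemma, using only the assumed moment $E\{(Z^tZ)^{u/2}\}<\infty$ with $u>2$. This makes $\log R_n$ converge a.s.\ to a finite limit, so $R_n\to R\in(0,\infty)$ a.s., forcing $Q\succ0$. I expect this to be the main obstacle: it requires the scatter matrix to be nonsingular from some index onward (so the determinant recursion applies), and the truncation must be done carefully because only a moment of order $u>2$—not a fourth moment—is available to control the variance of $\norm{Z_{n+1}}^2$.

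Finally I would obtain total variation convergence from continuity of the family $\mathcal{L}(a,B)$ at nonsingular $B$. As $Z$ has a density $g$, the measure $\mathcal{L}(a,B)$ has density $x\mapsto(\det B)^{-1/2}g\bigl(B^{-1/2}(x-a)\bigr)$ whenever $B\succ0$. Since $(M_n,Q_n)\to(M,Q)$ a.s.\ with $Q\succ0$, and $B\mapsto B^{-1/2}$ and $B\mapsto\det B$ are continuous at $Q$, the densities of $\alpha_n=\mathcal{L}(M_n,Q_n)$ converge a.s.\ pointwise to the density of $\alpha=\mathcal{L}(M,Q)$; Scheff\'e's lemma then gives $\norm{\alpha_n-\alpha}\to0$ a.s.
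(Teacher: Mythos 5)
Your proposal reaches the same conclusions by a route that coincides with the paper's in its second half but is genuinely different, and in places cleaner, in its first half. For the convergence of the sample moments, the paper works directly with $Q_n$: from the recursion $Q_{n+1}=\frac{n}{n+1}Q_n+\frac{n}{(n+1)^2}L_nL_n^t$ it obtains $E\{Q_{n+1}^{(i,j)}\mid\mathcal{F}_n\}=Q_n^{(i,j)}\bigl(1-(n+1)^{-2}\bigr)$, so the diagonal entries are non-negative supermartingales and each entry is an $L^1$-bounded quasi-martingale, hence converges a.s. Your observation that the second-moment matrix $S_n=Q_n+M_nM_n^t$ is an \emph{exact} matrix martingale (since $E[X_{n+1}X_{n+1}^t\mid\mathcal{F}_n]=S_n$) replaces the quasi-martingale machinery by the ordinary convergence theorem for non-negative martingales applied to $v^tS_nv$ plus polarization; this is a real simplification, and the treatment of $M_n$ is the same in both arguments. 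For positive definiteness of $Q$ the two proofs are essentially identical: your determinant recursion is exactly the paper's (obtained there via Lemma \ref{ji87c4}), and the convergence of the logarithmic series under only a moment of order $u>2$ is handled in the paper by its Lemma \ref{7yu8}, which is precisely the truncation/three-series device you sketch. You are right to flag nonsingularity of $Q_n$ from some index onward as the entry point of the determinant recursion; note that both your argument and the paper's secure $\det Q_2>0$ only through the convention $Q_0=Q_1=I$ in the recursive representation (the literal two-point sample covariance has rank one when $p\ge 2$), so this reliance is shared, not a defect specific to your write-up.

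The one step that does not go through as written is the last one. You invoke Scheff\'e's lemma after asserting that the densities $(\det Q_n)^{-1/2}\,g\bigl(Q_n^{-1/2}(x-M_n)\bigr)$ converge pointwise; but $Z$ is only assumed to \emph{have} a density $g$, with no continuity, so $(M_n,Q_n)\to(M,Q)$ does not imply $g\bigl(Q_n^{-1/2}(x-M_n)\bigr)\to g\bigl(Q^{-1/2}(x-M)\bigr)$ for almost every $x$ --- a Borel density can be modified on a dense null set, destroying any pointwise convergence along the moving arguments. The repair is exactly the paper's opening argument: approximate $g$ in $L^1$ by a continuous integrable $f$, observe that the corresponding location--scale transforms of $f$ converge in $L^1$ by continuity together with a dominated-convergence (or uniform-integrability) argument, and absorb the approximation error uniformly in $n$; equivalently, use the $L^1$-continuity of the affine action $(a,B)\mapsto(\det B)^{-1/2}g\bigl(B^{-1/2}(\cdot-a)\bigr)$ at nonsingular $B$. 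With that substitution in the final step, your proof is complete.
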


\medskip

\noindent By Theorem \ref{q2098nfer}, $\alpha_n$ converges in total variation a.s. to $\alpha=\mathcal{L}(M,Q)$, where $M$ and $Q$ are the a.s. limits of the sample means and the sample covariance matrices, respectively. As noted in Section \ref{x4rg7h}, this asymptotic behavior of $\alpha_n$ is useful in various frameworks. To make Theorem \ref{q2098nfer} more effective, however, it is desirable to grasp some information on the convergence rate of $\alpha_n$ to $\alpha$. This is actually possible if $\mathcal{L}(a,B)=\mathcal{N}(a,B)$.

\medskip

\begin{theorem}\label{de4v6}
Let $\alpha_n$, $M$ and $Q$ be as in Theorem \ref{q2098nfer}. Suppose $\mathcal{L}(a,B)=\mathcal{N}(a,B)$, define $\alpha=\mathcal{N}(M,Q)$, and fix any sequence $(d_n)$ of constants. Then,
$$d_n\,\norm{\alpha_n-\alpha}\overset{P}\longrightarrow 0\quad\quad\text{whenever }\,\frac{d_n}{\sqrt{n}}\rightarrow 0.$$
\end{theorem}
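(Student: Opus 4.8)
The plan is to estimate $\norm{\alpha_n-\alpha}=\norm{\mathcal{N}(M_n,Q_n)-\mathcal{N}(M,Q)}$ quantitatively in terms of $\abs{M_n-M}$ and $\norm{Q_n-Q}$, and then exploit the fact that these two differences decay at rate $n^{-1/2}$ in probability. Concretely, I would first prove a deterministic bound of the form
\begin{gather*}
\norm{\mathcal{N}(a,B)-\mathcal{N}(a',B')}\le g(B,B')\,\bigl(\abs{a-a'}+\norm{B-B'}\bigr)
\end{gather*}
valid for all $a,a'\in\mathbb{R}^p$ and all $B,B'\in\mathcal{M}$, where $g$ is continuous and finite whenever $B,B'\in\mathcal{M}$. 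The standard route is Pinsker's inequality, $\norm{\mu-\nu}\le\sqrt{\tfrac12\,\mathrm{KL}(\mu\,\|\,\nu)}$, combined with the closed-form KL divergence between two Gaussians, which for $\mathcal{N}(a,B)$ and $\mathcal{N}(a',B')$ equals
\begin{gather*}
\tfrac12\Bigl[\mathrm{tr}(B'^{-1}B)-p+(a-a')^tB'^{-1}(a-a')+\log\tfrac{\det B'}{\det B}\Bigr].
\end{gather*}
A first-order Taylor expansion of this expression around $(a',B')=(a,B)$ shows it is $O(\abs{a-a'}^2+\norm{B-B'}^2)$ locally, with constants depending continuously on $B^{-1}$; taking square roots yields the Lipschitz-type bound above with $g$ controlled by $\norm{B^{-1}}$ and $\norm{B'^{-1}}$.

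The second ingredient is the rate at which the empirical moments converge. By Theorem \ref{q2098nfer} we already know $M_n\to M$ and $Q_n\to Q$ a.s.\ with $Q\in\mathcal{M}$ a.s. What I additionally need is that $\sqrt{n}\,(M_n-M)$ and $\sqrt{n}\,(Q_n-Q)$ are bounded in probability, i.e.\ $\abs{M_n-M}=O_P(n^{-1/2})$ and $\norm{Q_n-Q}=O_P(n^{-1/2})$. This is where the structure of the model enters: under $P$, the sequence $X$ is conditionally identically distributed (the framework of \cite{AOP04} invoked earlier), and for such sequences the differences $M_n-M$ and $Q_n-Q$ are driven by martingale-type fluctuations whose $L^2$ norms shrink like $1/n$. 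Concretely, I expect $M_n-M=M_n-\lim_k M_k$ to be expressible as a tail sum of martingale increments with $E\,\abs{M_n-M}^2=O(1/n)$, and similarly for $Q_n-Q$ after controlling the centering term $M_n$ inside the quadratic. The finite moment assumption $E\{(Z^tZ)^{u/2}\}<\infty$ for some $u>2$ guarantees enough integrability for the covariance fluctuations to have the requisite second moments.

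Putting these together, on the event $\{Q\in\mathcal{M}\}$ (which has probability one) the factor $g(Q_n,Q)$ converges a.s.\ to the finite constant $g(Q,Q)$, hence $g(Q_n,Q)=O_P(1)$. Multiplying the Lipschitz bound,
\begin{gather*}
d_n\,\norm{\alpha_n-\alpha}\le d_n\,g(Q_n,Q)\,\bigl(\abs{M_n-M}+\norm{Q_n-Q}\bigr)=\frac{d_n}{\sqrt{n}}\cdot O_P(1)\cdot O_P(1),
\end{gather*}
and the hypothesis $d_n/\sqrt{n}\to 0$ forces the right-hand side to $0$ in probability, giving the claim. The main obstacle I anticipate is the second step, establishing the $O_P(n^{-1/2})$ rate for $Q_n-Q$: unlike $M_n$, the sample covariance is a nonlinear (quadratic) functional of the data, and the recentering by $M_n$ couples the mean and covariance fluctuations. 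I would handle this by writing $Q_n=\frac1n\sum_i X_iX_i^t-M_nM_n^t$ and treating the two pieces separately, using the already-established rate for $M_n$ to dispatch the $M_nM_n^t$ term and a martingale/CID argument for the raw second-moment average. A secondary care point is ensuring $g$ stays bounded in probability near the boundary of $\mathcal{M}$, but this is automatic since $Q\in\mathcal{M}$ a.s.\ keeps the limiting argument of $g$ away from singular matrices.
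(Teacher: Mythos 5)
Your overall strategy coincides with the paper's: bound $\norm{\mathcal{N}(M_n,Q_n)-\mathcal{N}(M,Q)}$ by a Lipschitz-type expression in $\norm{M_n-M}$ and $\norm{Q_n-Q}$, then show these differences are $O_P(n^{-1/2})$. The first step is fine: the paper obtains the inequality $\norm{\mathcal{N}(a,\Sigma_1)-\mathcal{N}(b,\Sigma_2)}\le C(\Sigma_1)\bigl\{\norm{b-a}_E+\norm{\Sigma_2-\Sigma_1}_F\bigr\}$ by quoting explicit Gaussian total variation bounds from \cite{KELB} and \cite{DMR}, and your Pinsker-plus-KL derivation is an acceptable substitute (with the caveat that the Taylor argument only gives a local bound, which suffices here because $(M_n,Q_n)\rightarrow(M,Q)$ a.s. and $Q\in\mathcal{M}$ a.s., so $g(Q_n,Q)$ is indeed $O_P(1)$).

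The gap is in the second step, which you correctly flag as the main obstacle but then dispatch too quickly. To get $E\{\norm{Q_n-Q}_F\}=O(n^{-1/2})$ you must control second moments of the increments of $Q_n$, hence fourth moments of $L_i=Q_i^{1/2}Z_{i+1}$, and, crucially, a bound on $E\{\norm{Q_i}_F^2\}$ that is \emph{uniform in} $i$. Your appeal to the hypothesis $E\{(Z^tZ)^{u/2}\}<\infty$ for some $u>2$ is not enough: that assumption belongs to the general class $\mathcal{L}$ and does not provide fourth moments; the reason Theorem \ref{de4v6} is stated only for $\mathcal{L}=\mathcal{N}$ is precisely that Gaussianity supplies them. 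Even granting all moments of $Z$, the uniform bound $q=\sup_iE\{\norm{Q_i^{1/2}}_F^4\}<\infty$ is not automatic and is the technical heart of the paper's proof: it is obtained by showing that $\bigl(Q_i^{(k,k)}\bigr)^2$ is a supermartingale, using the exact recursion $Q_{j+1}=\frac{j}{j+1}Q_j+\frac{j}{(j+1)^2}L_jL_j^t$ together with the Gaussian identity $E\bigl\{\bigl(L_j^{(k)}\bigr)^4\mid\mathcal{F}_j\bigr\}=3\bigl(Q_j^{(k,k)}\bigr)^2$. Relatedly, your proposed decomposition $Q_n=\frac{1}{n}\sum_iX_iX_i^t-M_nM_n^t$ is not well adapted to this model: under $P$ the $X_i$ are not i.i.d. (and conditional identity in distribution is established in the paper only for the copula-based $\beta_n$, not for these $\alpha_n$), so $\frac{1}{n}\sum_iX_iX_i^t$ is not a martingale and its compensator, $E(X_{i+1}X_{i+1}^t\mid\mathcal{F}_i)=M_iM_i^t+Q_i$, re-couples the mean and covariance fluctuations you were trying to separate. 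The clean route is the recursion above, which writes $Q-Q_n$ as a tail sum of an orthogonal martingale part plus an absolutely summable remainder, both controlled by $q$.
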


\medskip

\noindent \medskip

\noindent To better appreciate Theorem \ref{de4v6}, it is worth noting that $\sqrt{n}\,\norm{\alpha_n-\alpha}$ fails to converge to 0 in probability. We prove this fact for $p=1$. In this case, for each $x\in\mathbb{R}$, one obtains
$$\norm{\alpha_n-\alpha}\ge\Abs{\alpha_n\bigl((-\infty,x]\bigr)-\alpha\bigl((-\infty,x]\bigr)}=\Abs{\Phi\left(\frac{x-M_n}{\sqrt{Q_n}}\right)-\Phi\left(\frac{x-M}{\sqrt{Q}}\right)}$$
where $\Phi$ is the standard normal distribution function. Letting $x=M_n$, it follows that
\begin{gather*}
\sqrt{n}\,\norm{\alpha_n-\alpha}\ge\sqrt{n}\,\,\Abs{\Phi(0)-\Phi\left(\frac{M_n-M}{\sqrt{Q}}\right)}=\frac{\sqrt{n}}{\sqrt{2\pi}}\,\int_0^{\frac{\abs{M_n-M}}{\sqrt{Q}}}e^{-t^2/2}\,dt
\\\ge\frac{1}{\sqrt{2\pi}}\,\,\exp\left(-\frac{(M_n-M)^2}{2Q} \right)\,\frac{\sqrt{n}}{\sqrt{Q}}\,\abs{M_n-M}.
\end{gather*}
However, $\frac{\sqrt{n}}{\sqrt{Q}}\,(M_n-M)$ converges in distribution to the $\mathcal{N}(0,1)$. Hence, the previous inequality implies that $\sqrt{n}\,\norm{\alpha_n-\alpha}$ does not converge to 0 in probability.

\medskip

\subsection{Copula-based predictive distributions}\label{x6h8k1q}

A natural competitor of the predictives $\alpha_n$ of Section \ref{r4es3} are the copula-based predictive distributions of \cite{HMW18}.

\medskip

\noindent A bivariate {\em copula} is a distribution function on $\mathbb{R}^2$ whose marginals are uniform on $(0,1)$. The density of a bivariate copula, provided it exists, is said to be a {\em copula density}. (Henceforth, all densities are meant with respect to Lebesgue measure).

\medskip

\noindent Let $S=\mathbb{R}$. Fix a density $f_0$, a sequence $(c_n)$ of bivariate copula densities, and a sequence $(r_n)$ of constants in $(0,1]$. For the sake of simplicity, assume $f_0>0$ and $c_n>0$ for all $n\ge 0$. Then, a sequence $(\beta_n)$ of predictive distributions may be built as follows. Call $F_0$ the distribution function corresponding to $f_0$ and define
$$f_1(x\mid y)=(1-r_0)\,f_0(x)+r_0\,f_0(x)\,c_0\Bigl(F_0(x),\,F_0(y)\Bigr)\quad\text{for }x,\,y\in\mathbb{R}.$$
Note that $f_1:\mathbb{R}^2\rightarrow\mathbb{R}^+$ is a measurable function such that $f_1(\cdot\mid y)$ is a density for each fixed $y\in\mathbb{R}$. Hence, if $F_1(\cdot\mid y)$ denotes the distribution function corresponding to $f_1(\cdot\mid y)$, one can let
$$f_2(x\mid y,z)=(1-r_1)\,f_1(x\mid y)\,+\,r_1\,f_1(x\mid y)\,c_1\Bigl(F_1(x\mid y),\,F_1(z\mid y)\Bigr)$$
for $x,\,y,\,z\in\mathbb{R}$. In general, given a measurable function $f_n:\mathbb{R}^{n+1}\rightarrow\mathbb{R}^+$ such that $f_n(\cdot\mid y)$ is a density for each fixed $y\in\mathbb{R}^n$, one can define
$$f_{n+1}(x\mid y,z)=(1-r_n)\,f_n(x\mid y)\,+\,r_n\,f_n(x\mid y)\,c_n\Bigl(F_n(x\mid y),\,F_n(z\mid y)\Bigr)$$
for all $x,\,z\in\mathbb{R}$ and $y\in\mathbb{R}^n$. In the above formula, $F_n(\cdot\mid y)$ denotes the distribution function corresponding to $f_n(\cdot\mid y)$. Proceeding in this way, one obtains a collection of densities $\bigl\{f_n(\cdot\mid y):n\ge 1,\,y\in\mathbb{R}^n\bigr\}$ and the $\beta_n$ can be defined as
$$\beta_0(dx)=f_0(x)\,dx\quad\text{and}\quad\beta_n(dx)=f_n\bigl(x\mid X(n)\bigr)\,dx\quad\text{for }n\ge 1.$$

\medskip

\noindent The predictives $\beta_n$ have been introduced in \cite{HMW18} and then used in \cite{FHW23}. The construction given here is from \cite{STS23}.

\medskip

\noindent If the predictive distributions of $X$ are the $\beta_n$, then $X$ is conditionally identically distributed. This is proved in \cite{STS23} when $r_n=1$ but the proof of \cite{STS23} can be easily extended to any choice of $r_n$. Therefore, since $X$ is conditionally identically distributed, there is a r.p.m. $\beta$ such that
$$\beta_n\rightarrow\beta\quad\text{weakly a.s.}$$
A natural question is whether $\beta_n\rightarrow\beta$ in total variation a.s., or equivalently
\begin{gather}\label{m8j9yu7}
\beta(dx)=f(x)\,dx\quad\quad\text{for some random density }f;
\end{gather}
see Example \ref{u8j4z2q1}. Among other things, condition \eqref{m8j9yu7} is tacitly assumed in \cite{FHW23} and \cite{HMW18}. As we now prove, however, it is not necessarily true.

\medskip

\begin{example}\label{bh78u}
Condition \eqref{m8j9yu7} fails whenever
$$\limsup_nr_n>0\quad\text{and}\quad c_n=c\quad\text{for all }n\ge 0$$
where $c$ is any bivariate copula density such that $\int_0^1 \abs{c(u,\,v)-1}\,du>0$ for each $v\in (0,1)$. In fact, suppose the predictives of $X$ are the $\beta_n$ and define
$$D_n=\int_{-\infty}^\infty\,\Abs{f_{n+1}\bigl(x\mid X(n+1)\bigr)-f_n\bigl(x\mid X(n)\bigr)}\,dx.$$
Since $X$ is conditionally identically distributed, condition \eqref{m8j9yu7} amounts to
$$\int_{-\infty}^\infty\,\Abs{f_n\bigl(x\mid X(n)\bigr)-f(x)}\,dx\,\overset{a.s.}\longrightarrow\,0\quad\quad\text{for some random density }f.$$
Hence, under \eqref{m8j9yu7}, one obtains $D_n\overset{a.s.}\longrightarrow 0$. However,
\begin{gather*}
D_n=r_n\,\int_{-\infty}^\infty\,\Abs{c\Bigl(F_n(x\mid X(n)),\,F_n(X_{n+1}\mid X(n))\Bigr)-1}\,\,f_n(x\mid X(n))\,dx
\\=r_n\,\int_0^1 \Abs{c\Bigl(u,\,F_n(X_{n+1}\mid X(n))\Bigr)-1}\,du.
\end{gather*}
Conditionally on $X(n)$, the random variable $F_n(X_{n+1}\mid X(n))$ is uniformly distrubuted on $(0,1)$. Hence, $D_n$ is distributed as
$$r_n\int_0^1 \abs{c(u,\,V)-1}\,du$$
where $V$ denotes any random variable with uniform distribution on $(0,1)$. Since $\int_0^1 \abs{c(u,\,V)-1}\,du>0$ a.s. and $\limsup_nr_n>0$, it follows that $D_n$ does not converge to 0 a.s. Therefore, condition \eqref{m8j9yu7} fails.
\end{example}

\medskip

\noindent Example \ref{bh78u} should be compared with Theorem \ref{q2098nfer}. According to the latter, $\alpha_n\rightarrow\alpha$ in total variation a.s. where the $\alpha_n$ are given by \eqref{cf332wq1} and $\alpha=\mathcal{L}(M,Q)$. Since $\mathcal{L}(0,I)$ admits a density, one obtains $\alpha(dx)=f(x)\,dx$ for some random density $f$.

\medskip

\noindent Even if not generally true, however, condition \eqref{m8j9yu7} holds under some assumptions. We finally prove two results of this type. As probably expected, condition \eqref{m8j9yu7} holds provided $r_n\rightarrow 0$ fastly enough.

\medskip

\begin{theorem}\label{g7j09i}
Condition \eqref{m8j9yu7} holds whenever $\sum_nr_n<\infty$.
\end{theorem}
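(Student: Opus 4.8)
The plan is to combine the explicit formula for $D_n$ from Example \ref{bh78u} with completeness of $L^1$. Write $g_n=f_n\bigl(\cdot\mid X(n)\bigr)$ for the random density of $\beta_n$. The defining recursion gives $\int\abs{g_{n+1}-g_n}\,dx=D_n$, and repeating the computation of Example \ref{bh78u} (with $c_n$ in place of $c$) yields
$$D_n=r_n\int_0^1\abs{c_n\bigl(u,V_n\bigr)-1}\,du,\qquad V_n:=F_n\bigl(X_{n+1}\mid X(n)\bigr).$$

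The decisive step, which I would isolate first, is a purely deterministic bound on $D_n$. Since $c_n$ is a copula density it has uniform marginals, so $\int_0^1 c_n(u,v)\,du=1$ for almost every $v$. Consequently
$$D_n\le r_n\Bigl(\int_0^1 c_n(u,V_n)\,du+1\Bigr)=2\,r_n.$$
Summing and using the hypothesis, $\sum_n D_n\le 2\sum_n r_n<\infty$. Thus, for $P$-almost every $\omega$, the sequence $(g_n)$ is Cauchy in $L^1(\mathbb{R},dx)$, because $\int\abs{g_m-g_n}\,dx\le\sum_{k=n}^{m-1}D_k$ for $m>n$.

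By completeness of $L^1$ there is then a (random) function $f$ with $\int\abs{g_n-f}\,dx\to 0$ a.s. As each $g_n$ is a density, the limit $f$ is non-negative and integrates to $1$, hence is itself a density. Since the total variation distance between two absolutely continuous measures is one half the $L^1$ distance of their densities, this $L^1$ convergence means $\norm{\beta_n-f\,dx}\to 0$ a.s., and in particular $\beta_n\to f\,dx$ weakly a.s.

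It remains to match $f\,dx$ with $\beta$. Because $X$ is conditionally identically distributed, $\beta_n\to\beta$ weakly a.s. for some r.p.m. $\beta$; uniqueness of the weak limit then forces $\beta(\omega,\cdot)=f(\omega,\cdot)\,dx$ for $P$-almost every $\omega$, which is exactly \eqref{m8j9yu7}. I expect no serious obstacle here: the only points requiring a line of justification are the joint measurability of $f$ (inherited from that of the $g_n$) and the identification of the $L^1$ limit with the weak limit $\beta$, the latter being immediate from uniqueness of limits. The real content of the argument is the deterministic estimate $D_n\le 2\,r_n$, which reduces everything to summability of $(r_n)$.
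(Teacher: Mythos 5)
Your proof is correct and follows essentially the same route as the paper: telescoping the $L^1$ increments $D_n$ and bounding $\int_0^1\abs{c_n(u,v)-1}\,du\le 2$ via the uniform-marginal property of copula densities. The only (immaterial) difference is that you obtain the bound $D_n\le 2r_n$ pointwise a.s., whereas the paper bounds $E(D_n)\le 2r_n$ and concludes $\sum_nD_n<\infty$ a.s. from $\sum_nE(D_n)<\infty$.
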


\medskip

\noindent In the second result, the condition $\sum_nr_n<\infty$ is weakened at the price of a boundedness assumption on the copula densities $c_n$.

\medskip

\begin{theorem}\label{r58uh6}
Condition \eqref{m8j9yu7} holds whenever $\int_{-\infty}^\infty f_0(x)^2dx<\infty$, $\sum_nr_n^2<\infty$, and there is a constant $b$ such that
$$c_n(u,v)\le b\quad\text{for all }n\ge 0\text{ and }u,\,v\in [0,1].$$
\end{theorem}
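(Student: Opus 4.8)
The plan is to exhibit the densities $g_n=f_n(\cdot\mid X(n))$ as a pointwise-in-$x$ martingale, to bound them uniformly in $L^2(\mathbb{R})$, and then to upgrade the weak convergence $\beta_n\to\beta$ (already available because $X$ is conditionally identically distributed) to total variation convergence by means of the Vitali convergence theorem. Throughout, let $F_n$ be the distribution function of $g_n$, set $V_n=F_n\bigl(X_{n+1}\mid X(n)\bigr)$ and $\mathcal{F}_n=\sigma\bigl(X(n)\bigr)$; note that $g_n>0$ since $f_0>0$ and $c_n>0$, so each $F_n$ is continuous and strictly increasing.

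First I would record the recursion, which by construction reads
$$g_{n+1}(x)=g_n(x)\,\bigl[1+r_n\bigl(c_n(F_n(x),V_n)-1\bigr)\bigr].$$
Because $\beta_n$ is the conditional law of $X_{n+1}$ given $X(n)$ and $F_n$ is continuous, conditionally on $\mathcal{F}_n$ the variable $V_n$ is uniform on $(0,1)$. As a copula density has uniform marginals, $\int_0^1 c_n(u,v)\,dv=1$ for a.e. $u$, whence $E\bigl[g_{n+1}(x)\mid\mathcal{F}_n\bigr]=g_n(x)$ for a.e. $x$. Thus, for a.e. $x$, the sequence $\bigl(g_n(x)\bigr)_n$ is a non-negative martingale with constant mean $f_0(x)$, so it converges a.s.; Fubini then shows that, with probability one, $g_n(x)\to g_\infty(x)$ for a.e. $x$, for some measurable $g_\infty\ge 0$.

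Next, and this is the crux, I would prove that $\sup_n\norm{g_n}_{L^2}^2<\infty$ a.s. Expanding the square and using that the increment $g_{n+1}-g_n$ has $\mathcal{F}_n$-conditional mean zero,
$$E\bigl[\norm{g_{n+1}}_{L^2}^2\mid\mathcal{F}_n\bigr]=\norm{g_n}_{L^2}^2+r_n^2\int g_n(x)^2\Bigl(\int_0^1\bigl(c_n(F_n(x),v)-1\bigr)^2\,dv\Bigr)\,dx.$$
The assumption $c_n\le b$ gives $\int_0^1\bigl(c_n(u,v)-1\bigr)^2\,dv\le b-1$, because $\int_0^1 c_n^2\,dv\le b\int_0^1 c_n\,dv=b$; hence $E\bigl[\norm{g_{n+1}}_{L^2}^2\mid\mathcal{F}_n\bigr]\le\bigl(1+(b-1)r_n^2\bigr)\,\norm{g_n}_{L^2}^2$. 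Since $\sum_nr_n^2<\infty$, the product $\gamma_n=\prod_{k<n}\bigl(1+(b-1)r_k^2\bigr)$ increases to a finite limit, so $\norm{g_n}_{L^2}^2/\gamma_n$ is a non-negative supermartingale started at $\int f_0^2\,dx<\infty$. It converges a.s. and is therefore a.s. bounded, which yields $C:=\sup_n\norm{g_n}_{L^2}^2<\infty$ a.s.

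Finally I would fix $\omega$ in the almost sure event on which the three facts above hold together with $\beta_n\to\beta$ weakly, and apply Vitali's theorem on $(\mathbb{R},dx)$. The family $\{g_n\}$ is uniformly integrable: by Cauchy--Schwarz $\int_A g_n\,dx\le C^{1/2}\,m(A)^{1/2}$, where $m$ denotes Lebesgue measure. It is also tight, since a weakly convergent sequence of probability measures is tight, so $\sup_n\int_{\abs{x}>R}g_n\,dx\to 0$ as $R\to\infty$. Combined with $g_n\to g_\infty$ a.e., Vitali's theorem gives $\int\abs{g_n-g_\infty}\,dx\to 0$, i.e. $\norm{\beta_n-g_\infty\,dx}\to 0$. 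As total variation convergence implies weak convergence, $\beta=g_\infty\,dx$; in particular $\int g_\infty\,dx=1$, so $f=g_\infty$ is a random density and \eqref{m8j9yu7} holds. The single genuine difficulty is the $L^2$ bound: it is the only place where $\int f_0^2\,dx<\infty$, $\sum_nr_n^2<\infty$ and $c_n\le b$ are used jointly, whereas the potential escape of mass to infinity is neutralized for free by the tightness of the already-known weak limit.
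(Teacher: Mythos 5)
Your proof is correct, but it follows a genuinely different route from the paper's. The paper reduces the whole problem to the single bound $\sup_n E\bigl\{\int f_n(x\mid X(n))^2\,dx\bigr\}<\infty$ by invoking Theorem 4 of \cite{AOP13} (which, for conditionally identically distributed sequences, converts an $L^2$ bound in expectation into total variation convergence), and then verifies that bound through the pointwise conditional second-moment recursion $E\{(f_{k+1}/f_k)^2\mid X(k)\}\le 1+r_k^2(b^2-1)$. You instead re-prove the relevant special case of that cited result from scratch: the pointwise martingale property of $g_n(x)$ gives an a.e.\ limit $g_\infty$; the supermartingale $\norm{g_n}_{L^2}^2/\gamma_n$ gives an \emph{almost sure} (rather than in-expectation) uniform $L^2$ bound, hence uniform integrability via Cauchy--Schwarz; tightness is borrowed from the already-known weak convergence $\beta_n\to\beta$; and Vitali assembles these into $L^1$ convergence. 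Your argument is self-contained and makes transparent where each hypothesis enters (and your variance computation even yields the slightly sharper factor $1+(b-1)r_n^2$ in place of $1+(b^2-1)r_n^2$), at the cost of being longer; the paper's is shorter but opaque without the external lemma. Two small points worth a sentence in a write-up: the identity $\int_0^1 c_n(F_n(x),v)\,dv=1$ holds only for a.e.\ $u=F_n(x)$, and one should note that the exceptional null set of $u$'s pulls back to a Lebesgue-null set of $x$'s because $F_n$ has a strictly positive density (the paper glosses over the same point); and since a.e.\ convergence on $(\mathbb{R},dx)$ does not give global convergence in measure, the version of Vitali you apply should be the one using convergence in measure only on the finite-measure set supplied by tightness, which a.e.\ convergence does provide.
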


\medskip

\noindent Theorem \ref{r58uh6} applies to any sequence $c_n$ of bivariate copula densities, but it requires a strong boundedness condition on $c_n$. As expected, the latter condition can be dropped in some special cases. For instance, as proved in \cite[Theorem 5]{FHW23}, it can be dropped if $c_n=c_\rho$ for all $n$ where $c_\rho$ is a Gaussian copula density with correlation $\rho<1/\sqrt{3}$.

\medskip

\section{Numerical illustrations}\label{ane456}

This section reports the results of an empirical comparison between the predictives $\alpha_n$ of Section \ref{r4es3} and the copula-based predictive distributions $\beta_n$. The comparison is based on the speed of convergence and the performance in Bayesian parametric inference when applying the {\em predictive resampling} (PR) method.

\medskip

We let $p = 1$. Hence, $\alpha_0=\mathcal{N}(0,1)$, $\alpha_1=\mathcal{N}(X_1,1)$ and $\alpha_n=\mathcal{N}(M_n,Q_n)$ for $n>1$. In turn, the $\beta_n$ are built as in Section \ref{x6h8k1q} with
$$f_0=\,\text{standard normal density}\quad \text{and} \quad c_n=c_\rho \quad \text{for all} \; n\ge 0$$
where $c_\rho$ is the copula density corresponding to a bivariate normal distribution with mean 0, variance 1 and correlation $\rho\in (0,1)$; see \cite[Example 2]{HMW18}. Note that $\alpha_0=\beta_0=\mathcal{N}(0,1)$. To define the $\beta_n$, we also need a sequence $(r_n)$ of weights. For reasons that are pointed out in Sections \ref{speed_conv} and \ref{par_est}, $(r_n)$ is specified in two different ways.

\medskip

We begin with a brief recap on PR.

\subsection{Predictive resampling}\label{g7uj1}
The PR method, introduced in \cite{FHW23}, allows to make inference on a random parameter and/or to predict future observations. In a Bayesian framework, it is an alternative to the usual likelihood/prior scheme, and it has various similarities with the predictive approach to statistics \cite{STS23}. In particular, the explicit assignment of a prior distribution is not required.

\medskip

\noindent Suppose the object of inference is a function of the sequence $X$, say
$$\theta=f(X)=f(X_1,X_2,\ldots)$$
where $f$ is a known measurable function. As an obvious example, think of $\theta$ as the mean of a population described by $X$, that is
\begin{gather}\label{q3vv7h}
\theta = \lim\limits_{n \to \infty} \dfrac{1}{n} \sum\limits_{i = 1}^n X_i\quad\quad\text{(where the limit is assumed to exist a.s.)}
\end{gather}
More generally, the informal idea is that the uncertainty about $\theta$ would disappear if the whole data sequence $X$ could be observed. The goal is to estimate the distribution of $\theta$ based on the available data.

\medskip

To achieve this target, we first select a sequence $(\alpha_n)$ of predictive distributions. Moreover, we assume that a dataset
$$x=(x_1,\ldots,x_s)$$
has been observed, where $s$ denotes the time when PR is applied. Then, through $(\alpha_n)$ and $x$, we draw $N$ more observations $(X_{s+1},\ldots,X_{s+N})$ and we use the data $(x,X_{s+1},\ldots,X_{s+N})$ to estimate $\theta$. This procedure is repeated a number $B$ of times in order to end up with a sample of estimates of $\theta$. More precisely, let $\hat{\theta} = \hat{\theta}(x,X_{s+1},\ldots,X_{s+N})$ denote the adopted estimate of $\theta$. For instance, if $\theta$ is given by \eqref{q3vv7h}, it is quite natural to let
$$\hat{\theta} =\frac{\sum_{i=1}^sx_i+\sum_{i=1}^NX_{s+i}}{N+s}.$$
Once $\hat{\theta}$ has been chosen, PR can be outlined as follows.

\medskip

\begin{center}
\begin{minipage}{.65\linewidth}
\fbox{\begin{algorithm}[H]
\RestyleAlgo{boxed}
\For{$i = 1,\ldots,B$}{
\For{$j = 1, \ldots , N$}{
Sample $X_{s+j} \sim \alpha_{s+j-1}$ \\
Update: $\{X_{s+j},\alpha_{s+j-1}\} \mapsto \alpha_{s+j}$
}
Evaluate $\theta_i = \hat{\theta}(x,X_{s+1},\ldots,X_{s+N})$
}
Return $\bs{\theta} = (\theta_1, \ldots , \theta_B)$.
\end{algorithm}}
\end{minipage}
\end{center}

\medskip

Note that, since the data $x=(x_1,\ldots,x_s)$ are available, we start sampling from $\alpha_s$ and not from $\alpha_0$. Moreover, each $x_i$ is implicitly assumed to have been drawn from $\alpha_{i-1}$, for $i = 1,\ldots,s$.

\medskip

The final output of PR is a random sample $(\theta_1,\ldots,\theta_B)$ from the probability distribution of $\hat{\theta}(x,X_{s+1},\ldots,X_{s+N})$. In particular, $\theta_1,\ldots,\theta_B$ are i.i.d. and their common distribution $\Pi_N(\cdot\mid x)$ can be written as
$$ \Pi_N(A\mid x) = \int_{\mathbb{R}^N} \mathbbm{1}_A\{\hat{\theta}(x,x_{s+1},\ldots,x_{s+N}) \}\prod\limits_{j=1}^N \alpha_{s+j-1}(x,x_{s+1},\ldots,x_{s+j-1})(dx_{s+j}). $$

\medskip

$\Pi_N(\cdot\mid x)$ can be regarded as the PR analogue of the usual posterior distribution and it is used to approximate the distribution of $\theta$ given $x$, i.e.
$$ \Pi_{\infty}(A\mid x) = P(f \in A\mid x)$$
where $P(\cdot\mid x)$ denotes the probability law induced by the predictives $(\alpha_{n}:n \geq s) $ according to the Ionescu-Tulcea theorem; see Section \ref{intro}.

\medskip

So far, $(\alpha_n)$ is any sequence of predictives. However, for PR to make sense, $\alpha_n$ is required to converge (in some sense) as $n\rightarrow\infty$. Convergence is a sort of consistency condition. Roughly speaking, the population we are trying to reconstruct should mimic the observed sample. Essentially, it should be a version of the sample with similar shape and location but more uncertainty. This informal idea is realized by regarding such a population as distributed according to $\alpha$, where $\alpha$ is the limit of the $\alpha_n$. In addition, it is crucial that $\alpha_s(x)$ fits the observed data $x$ or, at least, shares with $x$ those features that we are estimating. Heuristically, these two requirements (convergence of $\alpha_n$ and good fit to $x$) guarantee that $\Pi_N(\cdot\mid x)$ approximates $\Pi_\infty(\cdot\mid x)$.

\subsection{Speed of convergence}\label{speed_conv}

Let $f_n^\alpha$ and $f_n^\beta$ denote the densities of $\alpha_n$ and $\beta_n$ (with respect to Lebesgue measure). Following \cite{FHW23}, to evaluate the speed of convergence, we computed the $L^1$-distances $\norm{f_0 - f_n^\alpha}_1$ and $\norm{f_0 - f_n^\beta}_1$ where $f_0$ is the standard normal density. Roughly speaking, the informal idea is that, for any sequence $(f_n)$ of random densities, the value of $n$ at which $\norm{f_0 - f_n}_1$ plateaus can be regarded as the index after which $f_n$ is close to its limit (provided it exists).

\medskip

Recall that, thanks to Theorem \ref{q2098nfer}, $\norm{f_n^\alpha-f}_1\overset{a.s.}\longrightarrow 0$ for some random density $f$. As for $\beta_n$, we focus on two different situations. In the first, $\beta_n$ does not necessarily converge in total variation a.s. (at least to our knowledge) while in the second it does, i.e. \begin{itemize}
\item[a)] $\displaystyle{r_n = r^{(a)}_n = \left(2 - \frac{1}{n+1}\right)\frac{1}{n+2}}$ and $\rho = 0.9$.  This choice of $r_n$, inspired by the Dirichlet process mixture model, is proposed in \cite{FHW23}. However, Theorems \ref{g7j09i}-\ref{r58uh6} do not apply and the same happens for Theorem 5 of \cite{FHW23} since $\rho >1/\sqrt{3}$. Hence, whether or not $\beta_n$ converges in total variation a.s. is unknown.
\item[b)] $\displaystyle{r_n = r^{(b)}_n = \left(2 - \dfrac{1}{n+2}\right)\dfrac{1}{(n+3)\,\bigl(\log (n+3)\bigr)^2}}$ and $\rho = 0.9$. In this case, $\sum_n r_n < \infty$ so that $\beta_n$ converges in total variation a.s. by Theorem \ref{g7j09i}.
\end{itemize}

\medskip

\noindent To evaluate $\norm{f_0 - f_n^\alpha}_1$, we ran PR starting from $\alpha_0$ (and not from $\alpha_s(x)$ as in Section \ref{g7uj1}). In other terms, we just sampled $X_1 \sim \alpha_0$, $X_2 \sim \alpha_1(X_1)$, $X_3 \sim \alpha_2(X_1,X_2)$ and so on. Then, at each step of the algorithm, we evaluated $f_n^\alpha$ on a grid of points and computed $\norm{f_0 - f_n^\alpha}_1$ numerically. Exactly the same procedure has been followed for $\norm{f_0 - f_n^\beta}_1$.

\medskip

\noindent Figure \ref{gauss_path} shows the path of $\norm{f_0 - f_n^\alpha}_1$. It takes about 150 iteration of PR for the $L^1$-distance to stabilise.

\vspace{-0.25cm}

\begin{center}
\begin{figure}[H]
\includegraphics[width=8cm, height=7cm]{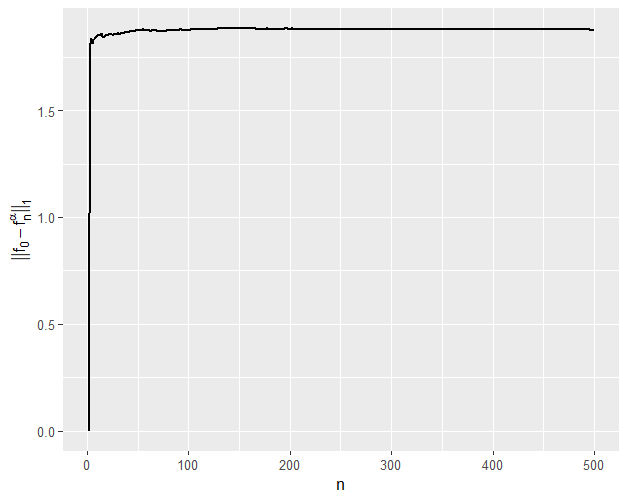}
\caption{$\alpha_n$}
\label{gauss_path}
\end{figure}
\end{center}

\vspace{-1cm}

Figure \ref{fig:cop_paths} shows the path of  $\norm{f_0 - f_n^\beta}_1$ for both $r_n^{(a)}$ and $r_n^{(b)}$ (with $\rho = 0.9$). In the first case, it takes about 4000 iteration of PR for the $L^1$-distance to stabilise, while in the second the behaviour of $\norm{f_0 - f_n^\beta}_1$ is similar to that of $\norm{f_0 - f_n^\alpha}_1$. There might be two reasons for this discrepancy. Either $\beta_n$ fails to converge in total variation a.s. when $r_n = r_n^{(a)}$ and $\rho = 0.9$. Or, $r_n^{(b)}$ approaches 0 so much faster than $r_n^{(a)}$ causing a faster convergence of $||f_0 - f_n^{\beta}||$.

\vspace{-0.25cm}

\begin{figure}[H]
\centering

  \makebox[\textwidth][c]{

  \begin{subfigure}[c]{0.6\textwidth}
    \includegraphics[width=8cm, height=6.5cm]{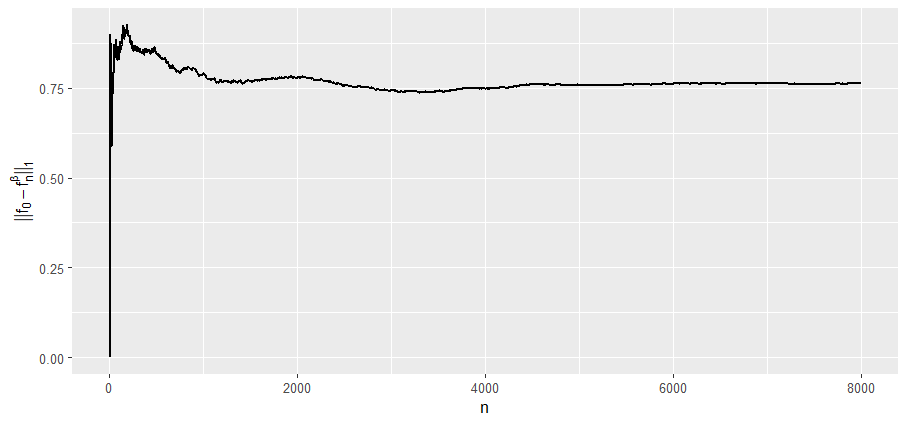}
    \caption{$r_n^{(a)} = \left(2 - \frac{1}{n+1}\right)\frac{1}{n+2}$ and $\rho = 0.9$.}
    \label{fig:cop_path}
  \end{subfigure} \hspace{5mm}
  \begin{subfigure}[c]{0.6\textwidth}
    \includegraphics[width=8cm, height=6.5cm]{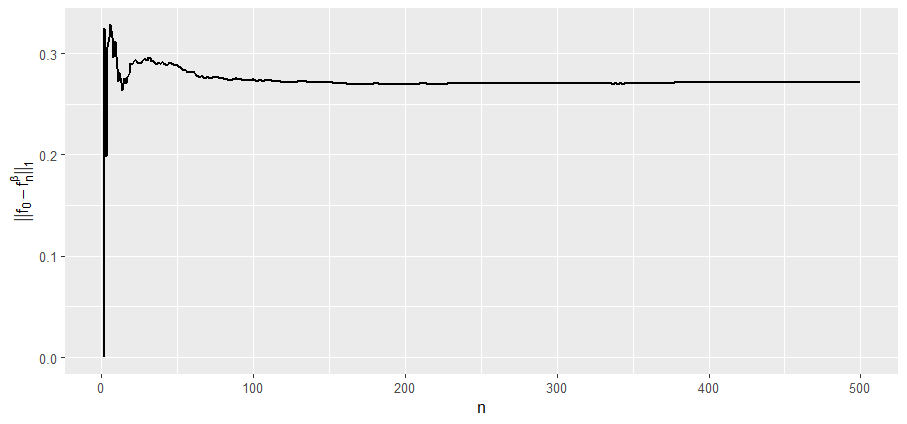}
    \caption{$r_n^{(b)} = \left(2-\frac{1}{n+2}\right)\frac{1}{(n+3)(\log(n+3))^2}$ and $\rho = 0.9$.}
    \label{fig:cop_na_path}
  \end{subfigure}\hspace{3mm}

  }%
  \caption{$\beta_n$}
  \label{fig:cop_paths}
\end{figure}

\subsection{Parameter estimation}\label{par_est}

We now compare $\alpha_n$ and $\beta_n$ when used in PR to estimate the posterior distribution $\Pi_N(\cdot\mid x)$ of a parameter $\theta$; see Section \ref{g7uj1}. Such a $\theta$ is taken to be the mean or the variance of a reference population. In the sequel, $\hat{\theta}$ is short for $\hat{\theta}(x,X_{s+1},\ldots,X_{s+N})$. Obviously, $\hat{\theta}$ is the sample mean of $(x,X_{s+1},\ldots,X_{s+N})$ if $\theta$ is the population mean, while it is the sample variance of $(x,X_{s+1},\ldots,X_{s+N})$ if $\theta$ is the population variance. Moreover, when necessary, we use the notations $\Pi_N^\alpha(\cdot\mid x)$ and $\Pi_N^\beta(\cdot\mid x)$ to stress that $\Pi_N(\cdot\mid x)$ has been obtained using the $\alpha_n$ or the $\beta_n$.

\medskip

The simulation study has been organised as follows. We drew random samples of size $s = 50, 100, 500, 1000, 1500, 2000$ from a mixture of three shifted t-distributions with weights $w_1 = 0.3, w_2 = 0.1 $ and $w_3 = 0.6$, shifting constants $\mu_1 = -5, \mu_2 = 0$ and $\mu_3 = 4$ and degrees of freedom $\nu = 3$. Then, we ran PR with $N = 50, 500, 5000$ and $B = 1000$. Once a sample $\bs{\theta} = (\theta_1,\ldots,\theta_B)$ from $\Pi_N(\cdot\mid x)$ was obtained, we applied kernel density estimation on $\bs{\theta}$ to estimate the posterior density $\pi_N(\cdot\mid x)$ of $\Pi_N(\cdot\mid x)$. We also kept track of the computational times. For the predictives $\beta_n$, we considered both sequences of weights $r_n^{(a)}$ and $r_n^{(b)}$ defined in Section \ref{speed_conv}. $B$ was chosen following \cite{FHW23}. In general, we found $B = 1000$ allows to visualise well the posterior density and does not make PR too slow. As for $s$ and $N$, we decided to consider different values in order to see how $\pi_N(\cdot\mid x)$ varies when more information is available (i.e. $s$ gets larger) and when more uncertainty about the observed sample is reconstructed (i.e. $N$ gets larger). When $\theta$ is the population mean,\linebreak $\pi_N^\beta(\cdot\mid x)$ has been obtained with $r_n = r_n^{(a)}$. Instead, when $\theta$ is the population variance, $\pi_N^\beta(\cdot\mid x)$ has been obtained with $r_n = r_n^{(b)}$. There are no relevant differences between the effect of the two sequences of weights on $\pi_N^\beta(\cdot\mid x)$, but we wanted to provide evidence that both perform well.

\medskip

Figure \ref{fig:mean_posterior} shows the graph of $\pi_N(\cdot\mid x)$ when $\theta$ is the population mean. As a general pattern, note that the posterior variance of $\hat{\theta}$ increases as $N$ gets larger. This might not be clear from the plots because of the different scaling of the axes in the sub-figures corresponding to different values of $N$. Therefore, as an example, in Table \ref{table:var_mean} we report the posterior variance of $\hat{\theta}$ for $s = 2000$. On the other hand, $N$ has no effect on the posterior mean of $\hat{\theta}$. This can already be noted from the plots, but see also Table \ref{table:mean_mean}. Such a behaviour is precisely what we would expect from PR when the predictives are designed as required in Section \ref{g7uj1}, i.e. they converge and fit the observed data $x$. Recall that the goal of PR is to reconstruct uncertainty about the observed sample without changing its shape and location. As shown by Figure \ref{fig:mean_posterior}, this reflects into $\pi_N(\cdot\mid x)$ having larger variance as $N$ increases, but keeping the same mean and number of modes. For $\alpha_n$ this behaviour can be motivated even more explicitly. Indeed, it is quite straightforward to see that
$$ E_N^\alpha(\hat{\theta}\mid x) = \bar{x}_s \quad \text{and} \quad Var_N^\alpha(\hat{\theta}\mid x) = \hat{\sigma}_s^2 \frac{s}{s+1}  \sum_{i = 1}^N \frac{s+i}{(s+i-1)(s+i)^2}, $$ where $\bar{x}_s$ and $\hat{\sigma}_s^2$ are the mean and the variance of the observed dataset $x$. This shows that $\pi_N^\alpha(\cdot\mid x)$ is centred into the sample mean, while its variance increases in $N$.

\medskip

The effect of the sample size $s$ is quite different between $\pi_N^\alpha(\cdot\mid x)$ and $\pi_N^\beta(\cdot\mid x)$. In both cases, the posterior density that best grasps $\theta$, being basically centred on its value, is the one corresponding to $s = 1000$. Nevertheless, for the other values of $s$, the former performs slightly better than the latter. Indeed, $\theta$ always falls in the main body of $\pi_N^\alpha(\cdot\mid x)$, while it is on the tails of $\pi_N^\beta(\cdot\mid x)$ or even quite far from it. Since $E_N^\alpha(\hat{\theta}|x) = \bar{x}_s \overset{a.s.}\longrightarrow \theta$ as $s \to \infty$, using the $\alpha_n$ it is quite natural to expect that estimates are improved by a larger sample size. What is more subtle is the combined effect of $s$ and $N$ on $\pi_N^\alpha(\cdot\mid x)$. Indeed, since a bigger $N$ implies a larger posterior variance, this causes the posterior to catch the true value of $\theta$ even for those values of $s$ for which $\pi_N^\alpha(\cdot\mid x)$ is rather distant from $\theta$ when $N$ is small. As an example, for $s=500$, look at how $\pi_N^\alpha(\cdot\mid x)$ (green curve) changes as $N$ increases.

\medskip

A last remark on Figure \ref{fig:mean_posterior} is in order. As mentioned in Section \ref{g7uj1}, for PR to perform well, the starting predictive distribution (i.e., $\alpha_s(x)$ or $\beta_s(x)$) should fit $x$ well or at least preserve those features that we are interested in estimating. More explicitly, if we want to estimate the posterior distribution of the mean of a population, the predictive mean at time $s$ should be close to $\bar{x}_s$. This is precisely the case of $\alpha_s(x)$. On the contrary, $\beta_s(x)$ is a much more complicated object. In particular, as shown extensively in \cite{FHW23}, $f_s^\beta$ is a very effective density estimator. Hence, Figure \ref{fig:mean_posterior} is an evidence of the fact that to estimate the posterior of a parameter it is enough that the predictive distribution preserves the information about that parameter originating from the observed data. Indeed, both methods provide satisfying estimates (the $\alpha_n$ are even slightly better) and implementing the simpler one is even computationally more efficient. The latter statement will be clarified at the end of this section, but now we can already note that estimating two parameters (i.e. mean and variance) is obviously faster than estimating an entire density.

\medskip

\begin{table}[ht]
    \centering
    \captionsetup{width=0.4\linewidth}
    \begin{minipage}{0.5\linewidth}
        \centering
        \begin{tabular}{|>{\centering\arraybackslash}p{1cm}|>{\centering\arraybackslash}p{1cm}|>{\centering\arraybackslash}p{1cm}|>{\centering\arraybackslash}p{1cm}|}
            \hline
            $N$ & 50 & 500 & 5000 \\ \hline
            $\alpha_n$ & 0.926 & 0.926 & 0.923 \\ \hline
            $\beta_n$ & 0.925 & 0.926 & 0.926 \\ \hline
        \end{tabular}
        \caption{$E_N(\hat{\theta}\mid x)$ for $\theta = \text{population mean}$ and $s = 2000$.}
        \label{table:mean_mean}
    \end{minipage}%
    \hfill
    \begin{minipage}{.5\linewidth}
        \centering
        \begin{tabular}{|>{\centering\arraybackslash}p{1cm}|>{\centering\arraybackslash}p{1cm}|>{\centering\arraybackslash}p{1cm}|>{\centering\arraybackslash}p{1cm}|}
            \hline
            $N$ & 50 & 500 & 5000 \\ \hline
            $\alpha_n$ & 0.0002 & 0.0019 & 0.0068 \\ \hline
            $\beta_n$ & 0.0002 & 0.0015 & 0.0019 \\ \hline
        \end{tabular}
        \caption{$Var_N(\hat{\theta}\mid x)$ for $\theta = \text{population mean}$ and $s = 2000$.}
        \label{table:var_mean}
    \end{minipage}
\end{table}
\vspace{-1mm}
Figure \ref{fig:var_posterior} shows the posterior distribution when $\theta$ is the population variance. The effect of $N$ and $s$ on $\pi_N(\cdot\mid x)$ is basically the same as in Figure \ref{fig:mean_posterior}, as much as the comparison between $\pi_N^\alpha(\cdot\mid x)$ and $\pi_N^\beta(\cdot\mid x)$. Even in this case, the former is more precise in catching the value of $\theta$. We just mention an additional detail about variance estimation when using the $\alpha_n$. In this case, one obtains
$$ E_N^\alpha(\hat{\theta}\mid x) = \hat{\sigma}_s^2 \frac{s}{s+1} \frac{s+N+1}{s+N} $$
and
$$ Var_N^\alpha(\hat{\theta}\mid x) = \hat{\sigma}_s^4 \left\{\prod_{i=1}^N \left[1 + \frac{3}{(s+i)^4} - \frac{4}{(s+i)^3} \right] - \left[ \frac{s}{(s+1)} \frac{s+N+1}{s+N} \right]^2 \right\}. $$
Therefore, $\pi_N^\alpha(\cdot\mid x)$ is still centered in the sample statistic of the parameter of interest (with some bias that vanishes as $s \to \infty$) and its dispersion increases as $N \to \infty$ (at least numerically). 

\begin{figure}[H]
\centering

  \makebox[\textwidth][c]{

  \begin{subfigure}[c]{0.6\textwidth}
    \includegraphics[width=8cm, height=6.5cm]{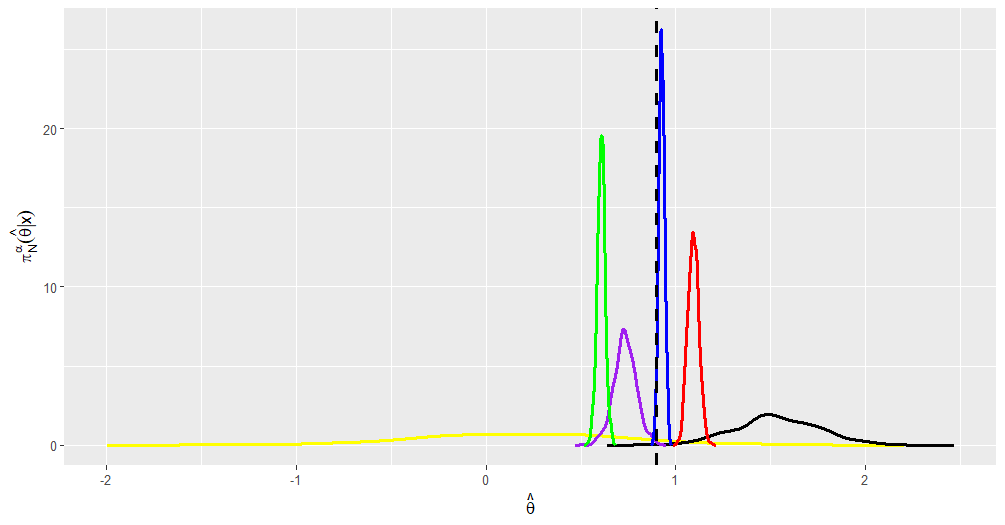}
    \caption{$\pi_N^\alpha(\cdot\mid x), N = 50$.}
    \label{fig:alpha_mean_50}
    \includegraphics[width=8cm, height=6.5cm]{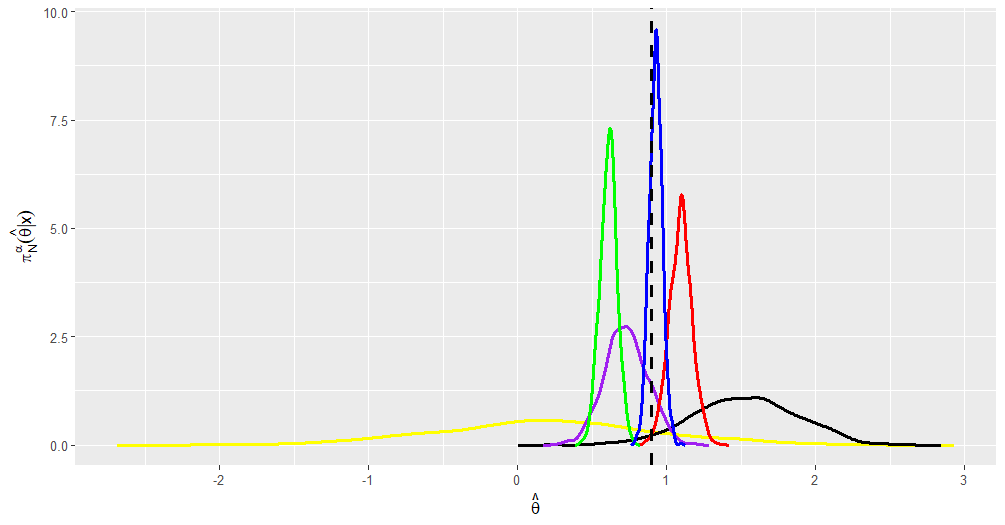}
    \caption{$\pi_N^\alpha(\cdot\mid x), N = 500$.}
    \label{fig:alpha_mean_500}
    \includegraphics[width=8cm, height=6.5cm]{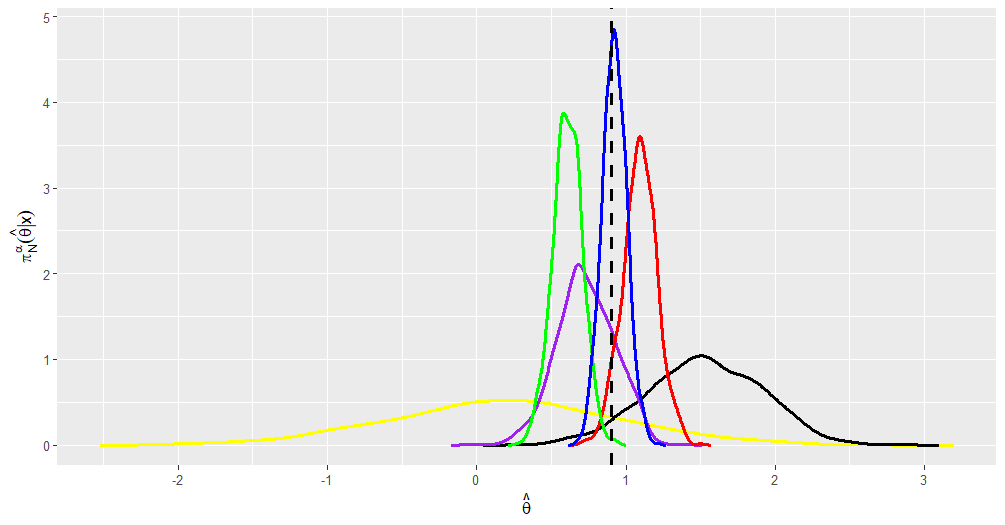}
    \caption{$\pi_N^\alpha(\cdot\mid x), N = 5000$.}
    \label{fig:alpha_mean_5000}
  \end{subfigure} \hspace{5mm}
  \begin{subfigure}[c]{0.6\textwidth}
    \includegraphics[width=8cm, height=6.5cm]{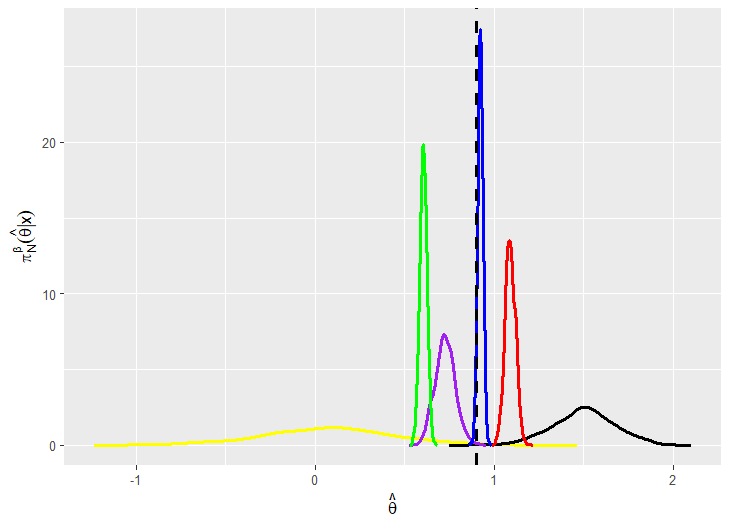}
    \caption{$\pi_N^\beta(\cdot\mid x), r_n^{(a)}, N = 50$.}
    \label{fig:beta_mean_50}
    \includegraphics[width=8cm, height=6.5cm]{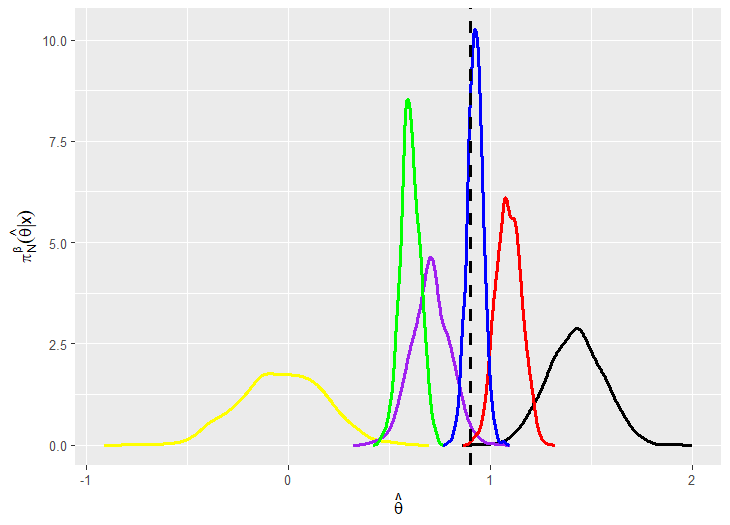}
    \caption{$\pi_N^\beta(\cdot\mid x),r_n^{(a)}, N = 500$.}
    \label{fig:beta_mean_500}
    \includegraphics[width=8cm, height=6.5cm]{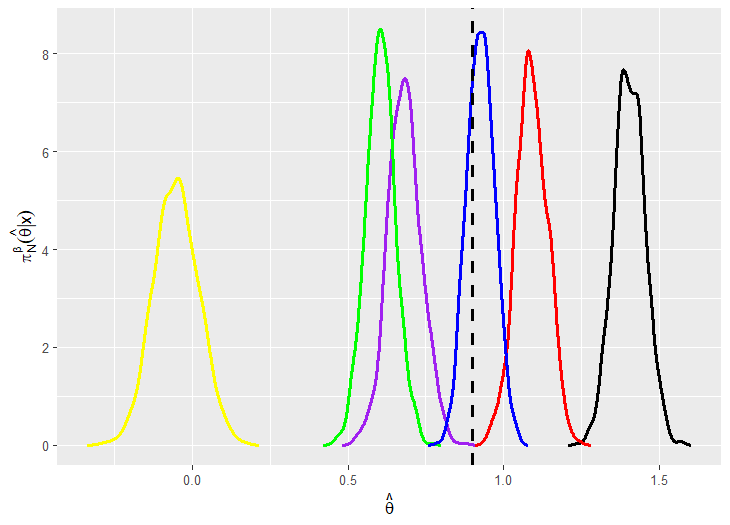}
    \caption{$\pi_N^\beta(\cdot\mid x), r_n^{(a)}, N = 5000$.}
    \label{fig:beta_mean_5000}
  \end{subfigure}\hspace{3mm}

 }%
  \caption{$\pi_N(\cdot\mid x)$ for $\theta = \text{population mean} = 0.9 $ and $s = 50 \ (\textcolor{black}{\rule[0.5ex]{1em}{1pt}}), 100 \ (\textcolor{red}{\rule[0.5ex]{1em}{1pt}}), 500 \ (\textcolor{green}{\rule[0.5ex]{1em}{1pt}}), 1000 \ (\textcolor{blue}{\rule[0.5ex]{1em}{1pt}}), 1500 \ (\textcolor{yellow}{\rule[0.5ex]{1em}{1pt}}), 2000 \ (\textcolor{violet}{\rule[0.5ex]{1em}{1pt}})$.}
  \label{fig:mean_posterior}
\end{figure}

\begin{figure}[H]
\centering

  \makebox[\textwidth][c]{

  \begin{subfigure}[c]{0.6\textwidth}
    \includegraphics[width=8cm, height=6.5cm]{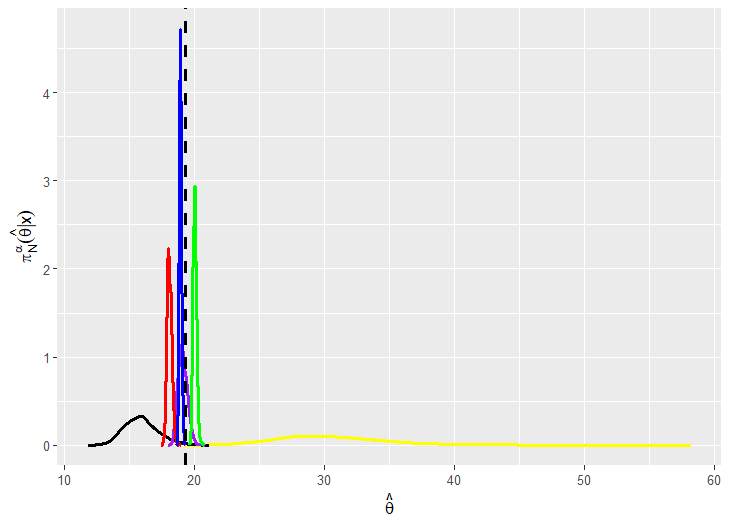}
    \caption{$\pi_N^\alpha(\cdot\mid x), N = 50$.}
    \label{fig:alpha_var_50}
    \includegraphics[width=8cm, height=6.5cm]{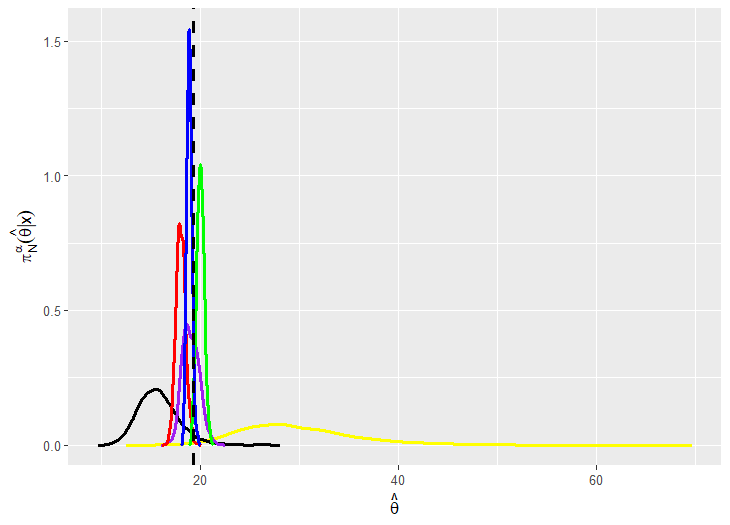}
    \caption{$\pi_N^\alpha(\cdot\mid x), N = 500$.}
    \label{fig:alpha_var_500}
    \includegraphics[width=8cm, height=6.5cm]{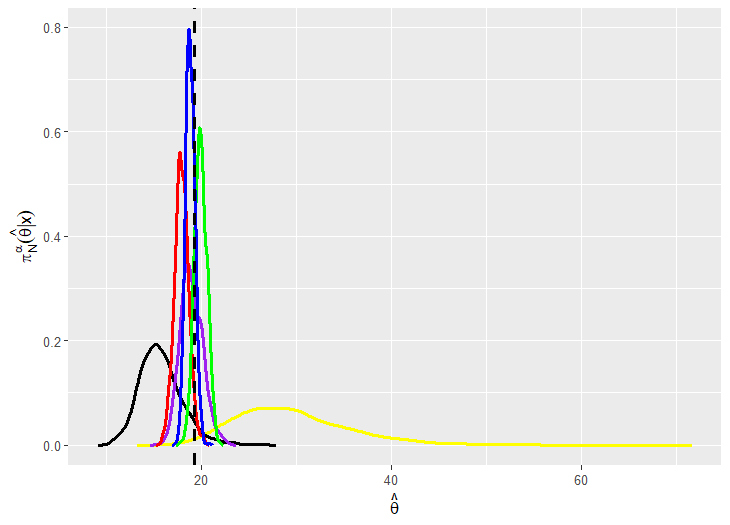}
    \caption{$\pi_N^\alpha(\cdot\mid x), N = 5000$.}
    \label{fig:alpha_var_5000}
  \end{subfigure} \hspace{5mm}
  \begin{subfigure}[c]{0.6\textwidth}
    \includegraphics[width=8cm, height=6.5cm]{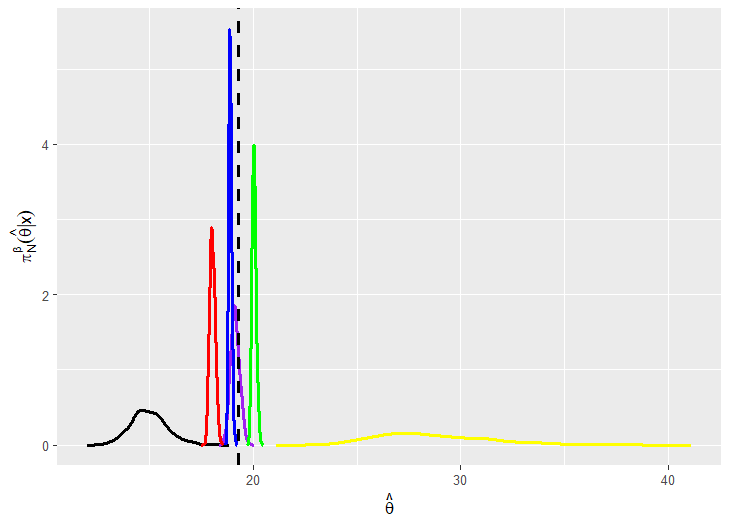}
    \caption{$\pi_N^\beta(\cdot\mid x), r_n^{(b)}, N = 50$.}
    \label{fig:beta_var_50}
    \includegraphics[width=8cm, height=6.5cm]{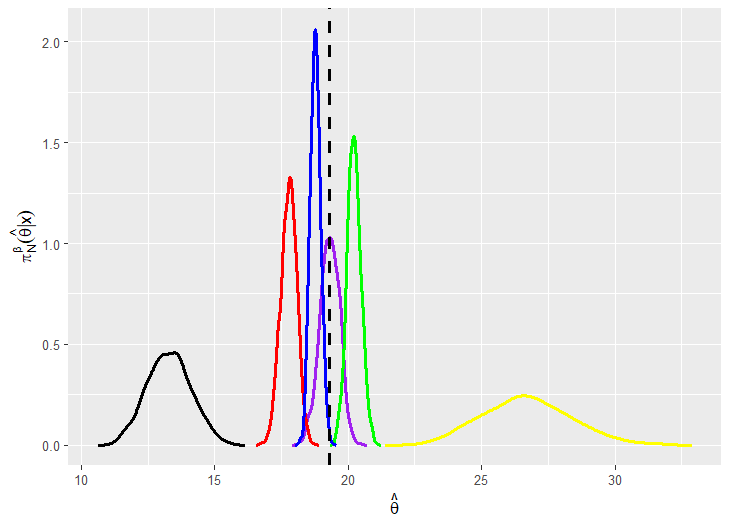}
    \caption{$\pi_N^\beta(\cdot\mid x),r_n^{(b)}, N = 500$.}
    \label{fig:beta_var_500}
    \includegraphics[width=8cm, height=6.5cm]{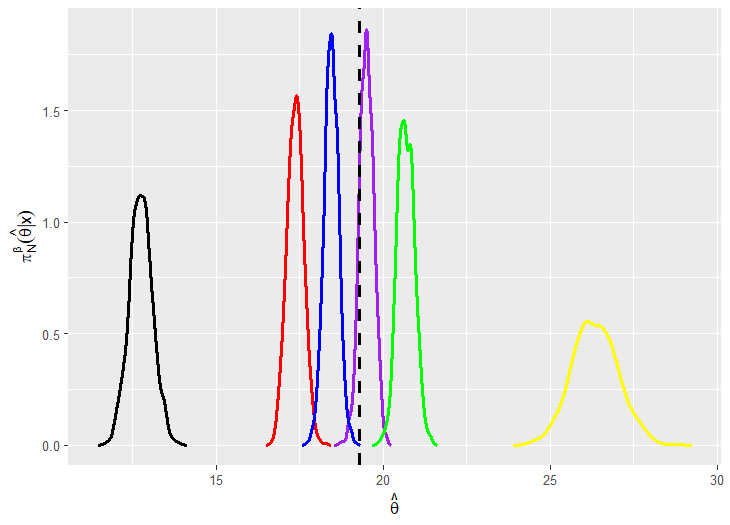}
    \caption{$\pi_N^\beta(\cdot\mid x), r_n^{(b)}, N = 5000$.}
    \label{fig:beta_var_5000}
  \end{subfigure}\hspace{3mm}

 }%
  \caption{$\pi_N(\cdot\mid x)$ for $\theta = \text{population variance} = 19.2 $ and $s = 50 \ (\textcolor{black}{\rule[0.5ex]{1em}{1pt}}), 100 \ (\textcolor{red}{\rule[0.5ex]{1em}{1pt}}), 500 \ (\textcolor{green}{\rule[0.5ex]{1em}{1pt}}), 1000 \ (\textcolor{blue}{\rule[0.5ex]{1em}{1pt}}), 1500 \ (\textcolor{yellow}{\rule[0.5ex]{1em}{1pt}}), 2000 \ (\textcolor{violet}{\rule[0.5ex]{1em}{1pt}})$.}
  \label{fig:var_posterior}
\end{figure}

We next make some general comments on the posterior estimates obtained with $\alpha_n$ and $\beta_n$. As highlighted by Figures \ref{fig:mean_posterior} and \ref{fig:var_posterior}, $\pi_N^\alpha(\cdot\mid x)$ is usually more precise than $\pi_N^\beta(\cdot\mid x)$. The reason may be that in the first case we can easily control the centre of the posterior distribution. Nevertheless, for parameters different from mean and variance (e.g. skewness, kurtosis, quantiles, etc.) $\beta_n$ is expected to provide better estimates. Indeed, $\alpha_n$ does not preserve information from the observed sample about any parameter other than mean and variance. On the contrary, since $\beta_n$ is essentially a density estimator, it grasps the shape of the whole distribution. Furthermore, according to us, the performance of $\beta_n$ could be enhanced in two (obvious) ways. Firstly, the correlation coefficient $\rho \in (0,1)$, involved in the Gaussian copula density $c_\rho$, has to be estimated. To do so, following \cite{FHW23}, we optimised the prequential log-likelihood for $\rho \in \{0.05, 0.1, \ldots, 0.95\}$. By taking a finer grid, $\beta_n$ might have a better fit on the observed data and this might make estimates more precise. The second possible improvement is related to the mechanism adopted to sample from $\beta_n$. Indeed, $X_{s+1},\ldots,X_{s+N}$ are generated via inverse sampling from a grid $\{y_1,\ldots,y_G\}$. Thus, once again, more accurate posterior estimates could be obtained by refining such grid. It is worth stressing that, if one wants only to update $\beta_n$ into $\beta_{n+1}$, it is not necessary to sample $X_{n+1} \sim \beta_n$ and plug it in $\beta_{n+1}$. As explained in \cite[Sec. 4.2]{FHW23}, it is enough to draw an observation from a uniform distribution on $(0,1)$. But clearly, we cannot use this shortcut for we need to know the actual value of the sampled variables. Finally, we note that these improvements come at the cost of longer computational times. We explore this further in the next paragraph.

\medskip

Tables \ref{table:time_alpha} and \ref{table:time_beta} show the time needed to perform PR for each value of $s$ and $N$. Algorithms for the implementation of PR have been coded in Rcpp. Computations have been carried out on a single-core version of R installed on a machine with an AMD Epyc 7763 2.45Ghz processor having 2TB RAM. As expected, in both cases PR is slower as $N$ and $s$ increase. The difference between the computational times of $\alpha_n$ and $\beta_n$, with the former being much faster than the latter, is quite evident. Reasons for this discrepancy have partially been pointed out already, so we summarise them and provide some more details. Sampling from $\alpha_n$ requires just to compute a mean and a variance and then draw an observation from a Gaussian distribution, for which efficient algorithms are available in any statistical software. On the other hand, sampling from $\beta_n$ entails estimating the hyperparameter $\rho$ and doing inverse sampling. The time needed for these tasks depends on the size of the grid of values taken to perform them. Such a grid cannot be too coarse, otherwise the accuracy of the estimates is worsened. Therefore, these two steps are likely to be the main reason why $\alpha_n$ is faster than $\beta_n$. Furthermore, the estimation of $\rho$ is affected by the sample size $s$. Indeed, as explained in \cite{FHW23}, optimizing the prequential log-likelihood requires to evaluate recursively $f_{i-1}^\beta(x_i\mid x_1,\ldots,x_{i-1})$, for each $i = 1,\ldots,s$ and each candidate value of $\rho$, and then pick the one that maximises $\sum_{i = 1}^s\log f_{i-1}^\beta(x_i\mid x_1,\ldots,x_{i-1})$. This procedure is slower the larger the value of $s$ is and has to be carried out before starting PR, because we cannot sample from $\beta_n$ without specifying $\rho$. On the other hand, sampling from $\alpha_n$ does not require any such complicated initialisation step. Therefore, $s$ has a much stronger effect on $\beta_n$ than it has on $\alpha_n$. To see this, look for instance at the increase in computational time from $s = 50$ to $s = 2000$, when $N = 5000$. In this case, PR with $\alpha_n$ is about 14 seconds slower while with $\beta_n$ it is more than 2 minutes slower.

\begin{table}[H]
\centering
\begin{tabular}{|c|c|c|c|c|c|c| }
 \cline{2-7}
 \multicolumn{1}{c|}{} & $s = 50$ & $s = 100$ & $s = 500$ & $s = 1000$ & $s = 1500$ & $s = 2000$ \\
  \hline
 $N = 50$ & 0.0004 & 0.0005 & 0.001 & 0.0016 & 0.0032 & 0.0029 \\
 \hline
 $N = 500$ & 0.0074 & 0.0081 & 0.0133 & 0.0179 & 0.0241 & 0.0299 \\
 \hline
$N = 5000$ & 0.3431 & 0.3498 & 0.4011 & 0.4591 & 0.521 & 0.5818 \\
 \hline
\end{tabular}
\caption{Computational times for $\alpha_n$ (in minutes).}
\label{table:time_alpha}
\end{table}

\begin{table}[H]
\centering
\begin{tabular}{|c|c|c|c|c|c|c| }
 \cline{2-7}
 \multicolumn{1}{c|}{} & $s = 50$ & $s = 100$ & $s = 500$ & $s = 1000$ & $s = 1500$ & $s = 2000$ \\
  \hline
 $N = 50$ & 0.0852 & 0.0988 & 0.3316 & 0.8326 & 1.5237 & 2.3548 \\
 \hline
 $N = 500$ & 0.6719 & 0.6313 & 0.8787 & 1.4696 & 2.1769 & 3.0014 \\
 \hline
$N = 5000$ & 6.547 & 5.9677 & 6.0797 & 7.0695 & 7.9625 & 8.6769 \\
 \hline
\end{tabular}
\caption{Computational times for $\beta_n$ (in minutes).}
\label{table:time_beta}
\end{table}

\section{Discussion}

Let $(X_n)$ be a sequence of $p$-dimensional random vectors and
$$\alpha_n(\cdot)=P\bigl(X_{n+1}\in\cdot\mid X_1,\ldots,X_n\bigr)$$
the corresponding predictive distributions. In this paper, a new sequence $(\alpha_n)$ of predictives is introduced. The main feature of $\alpha_n$ is to depend on $(X_1,\ldots,X_n)$ only through their mean and covariance matrix. Convergence in total variation of $\alpha_n$ is proved, along with an explicit formula for the limit r.p.m. In the (meaningful) special case where the $\alpha_n$ are Gaussian, the convergence rate is shown to be $n^{-1/2}$. A natural competitor of $\alpha_n$ is the copula-based predictive distribution $\beta_n$ of \cite{HMW18}. Thus, $\beta_n$ is investigated as well. It turns out that $\beta_n$ converges weakly but not necessarily in total variation. Hence, sufficient conditions for $\beta_n$ to converge in total variation are provided. Eventually, an empirical comparison between $\alpha_n$ and $\beta_n$ is performed, assessing their speed of convergence and their ability to estimate, via PR, the posterior distribution of the mean and the variance of a reference population.

\medskip

In all the numerical experiments, the kernel of $\alpha_n$ (denoted $\mathcal{L}$) is Gaussian. One reason is that, to estimate the posterior of a parameter $\theta$ via PR, it is enough for the predictive distribution to preserve the information about $\theta$ contained in the observed data. Hence, we chose $\mathcal{L} = \mathcal{N}$. In fact, when $\theta$ is the mean or the variance, it is straightforward to incorporate such information into a Gaussian distribution. Most probably, for estimating other parameters (e.g. skewness, kurtosis, quantiles, etc.) this simple model is no longer appropriate. Perhaps, the $\alpha_n$ should be enabled to capture the whole distribution of the observed data. Roughly speaking, a version of $\alpha_n$ more similar to $\beta_n$ is likely to work better. A natural example could be to set $\mathcal{L}$ as a mixture of Gaussian distributions, each of them centred in the main clusters of the observed data.

\medskip

Moving forward from the idea of taking $\mathcal{L}$ as a mixture of Gaussians, it would be reasonable to set $\alpha_0$ as a mixture distribution (possibly data-dependent) but then update only the mean and the variance of the components from which the observations are drawn (along with the corresponding weights). It would be interesting to study the asymptotics of the resulting sequence of predictives. In addition, these predictives might result into more precise estimates, since the starting distribution and the updating mechanism preserve more of the information contained in the observed data than just their mean and variance. Hence, they would be relevant even from a statistical point of view.

\medskip

We finally mention two further issues. The first is to give conditions on $\mathcal{L}$ under which the convergence rate of $\alpha_n$ is still $n^{-1/2}$ (as it happens when $\mathcal{L}=\mathcal{N}$). The second is using the $\alpha_n$ in regression problems (as done in \cite{FHW23} with the $\beta_n$). In this case, the data are of the type $X_n=(Y_n,Z_n)$, where $Y_n$ is a response variable and $Z_n$ a vector of covariates. Then, the predictives $\alpha_n$ could be applied to estimate, via PR, the conditional distribution of the response given the observed values of the covariates.

\medskip

\begin{center}\textbf{APPENDIX}\end{center}

\bigskip

\noindent We close the paper by proving the results of Section \ref{c5rsd3}. To this end, we first recall two known facts and we make a preliminary remark.

\begin{lemma}\label{ji87c4}
If $x,\,y\in\mathbb{R}^p$ and $B$ is a real non-singular matrix of order $p\times p$, then
$$\text{det}\bigl(B+xy^t)=\text{det}(B)\bigl(1+x^tB^{-1}y\bigr).$$
\end{lemma}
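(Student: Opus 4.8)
The plan is to turn the determinant of the rank-one update $B+xy^t$ into the determinant of a rank-one update of the identity matrix, whose value can be read off from its spectrum. Since $B$ is non-singular I would first factor it out, writing $B+xy^t=B\,(I+B^{-1}x\,y^t)$, so that multiplicativity of the determinant gives $\det(B+xy^t)=\det(B)\,\det(I+B^{-1}x\,y^t)$. The task then reduces to evaluating $\det(I+u\,v^t)$ for the single pair of vectors $u=B^{-1}x$ and $v=y$.

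For the rank-one perturbation $I+u\,v^t$ I would argue via eigenvalues: the matrix $u\,v^t$ annihilates the hyperplane $\{w:v^tw=0\}$, contributing the eigenvalue $0$ with multiplicity $p-1$, while $u$ is an eigenvector with eigenvalue $v^tu$; hence $I+u\,v^t$ has eigenvalue $1$ with multiplicity $p-1$ and eigenvalue $1+v^tu$ once, and $\det(I+u\,v^t)=1+v^tu$. (Equivalently one may compute the bordered determinant $\det\!\begin{pmatrix}I&-u\\ v^t&1\end{pmatrix}$ by its two Schur complements.) With $u=B^{-1}x$ and $v=y$ this yields $\det(I+B^{-1}x\,y^t)=1+y^tB^{-1}x$, whence
\begin{equation*}
\det(B+xy^t)=\det(B)\,\bigl(1+y^tB^{-1}x\bigr).
\end{equation*}

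The last step is to recognise the scalar factor as the one in the statement, and this is the point I expect to need the most care. The computation delivers $y^tB^{-1}x$, while the statement records $x^tB^{-1}y$; since both are $1\times1$ scalars, $y^tB^{-1}x$ equals its transpose $x^t(B^t)^{-1}y$, so the two expressions agree exactly when $B^t=B$, i.e.\ when $B$ is symmetric. In this paper the lemma is applied only with $B$ equal to a sample covariance matrix $Q_n\in\mathcal{M}$, which is symmetric, so $y^tB^{-1}x=x^tB^{-1}y$ and the identity holds precisely as stated, $\det(B+xy^t)=\det(B)\,(1+x^tB^{-1}y)$. I would flag explicitly that for a genuinely non-symmetric non-singular $B$ the correct factor is $y^tB^{-1}x$, so it is the symmetry of $B$ (always met in the applications) that makes the stated formula exact.
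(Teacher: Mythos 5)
Your proof is correct, but note that the paper does not actually prove this lemma: it is stated as well known, with a citation to Ding and Zhou \cite{DZ07}, so your argument is a self-contained substitute rather than a parallel to an in-paper proof. What you supply is the standard matrix determinant lemma derivation: factor out $B$ to reduce to $\det(I+uv^t)$ and evaluate that by the spectrum of the rank-one matrix $uv^t$. One small point of care: the eigenvector argument as phrased is loose in the degenerate case $v^tu=0$, since then $u$ itself lies in the hyperplane $\{w:v^tw=0\}$ and $uv^t$ need not be diagonalizable; the conclusion $\det(I+uv^t)=1+v^tu$ nevertheless holds because the characteristic polynomial of $uv^t$ is $\lambda^{p-1}(\lambda-v^tu)$, and your alternative bordered-determinant computation via the two Schur complements of $\bigl(\begin{smallmatrix}I&-u\\ v^t&1\end{smallmatrix}\bigr)$ covers this case with no extra hypotheses. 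Your transpose observation is genuinely valuable and goes beyond the paper: as stated for an arbitrary non-singular $B$, the factor $1+x^tB^{-1}y$ is in fact wrong, and the correct general scalar is $1+y^tB^{-1}x$ --- for instance, with $p=2$, $B=\bigl(\begin{smallmatrix}1&1\\0&1\end{smallmatrix}\bigr)$, $x=e_1$, $y=e_2$, one has $\det(B+xy^t)=1$ while $1+x^tB^{-1}y=0$. As you say, the two scalars agree when $B$ is symmetric, and in the paper's only application (Step 3 of the proof of Theorem \ref{q2098nfer}, where the lemma is invoked with $B=I$ and $x=y$ proportional to $Z_{n+1}$) the identity holds as stated, so nothing downstream is affected; still, the lemma's hypothesis should properly read ``$B$ symmetric'' or the factor should be written $1+y^tB^{-1}x$.
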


\medskip

\begin{lemma}\label{7yu8}
If $(V_n:n\ge 1)$ is an i.i.d. sequence of real random variables such that $E(V_1)=0$ and $E\bigl(\abs{V_1}^{1+\epsilon}\bigr)<\infty$, for some $\epsilon>0$, then
$$\text{the series }\,\sum_{n=1}^\infty\frac{V_n}{n}\,\text{ converges a.s.}$$
\end{lemma}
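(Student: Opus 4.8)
The plan is to invoke Kolmogorov's three-series theorem for sums of independent random variables, applied to the terms $Y_n=V_n/n$ with truncation performed at the natural scale: for the $n$-th term I would truncate $V_n$ at level $n$, i.e. work with $V_n\mathbbm{1}_{\{\abs{V_n}\le n\}}$. Since the $V_n$ are i.i.d., each of the three associated series will reduce to a deterministic series in the common law of $V_1$, and the whole proof comes down to showing that each of these three series converges. Before starting I would reduce to the case $\epsilon\le 1$ without loss of generality: if $\epsilon>1$ then $V_1$ has finite variance and the conclusion follows at once from Kolmogorov's one-series theorem, because $\sum_n\mathrm{Var}(V_n/n)=\mathrm{Var}(V_1)\sum_n n^{-2}<\infty$. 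Note also that $E(\abs{V_1}^{1+\epsilon})<\infty$ forces $E(\abs{V_1})<\infty$, which I will use freely.

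The first of the three series is the tail series $\sum_n P(\abs{V_n/n}>1)=\sum_n P(\abs{V_1}>n)$, and this is at most $E(\abs{V_1})<\infty$ by the standard tail-sum bound. The second is the centering series $\sum_n n^{-1}\,E\bigl(V_1\mathbbm{1}_{\{\abs{V_1}\le n\}}\bigr)$. Here I would exploit $E(V_1)=0$ to rewrite $E(V_1\mathbbm{1}_{\{\abs{V_1}\le n\}})=-E(V_1\mathbbm{1}_{\{\abs{V_1}>n\}})$, so that its absolute value is bounded by $E(\abs{V_1}\mathbbm{1}_{\{\abs{V_1}>n\}})\le n^{-\epsilon}\,E(\abs{V_1}^{1+\epsilon})$. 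The third is the variance series, controlled by $\sum_n n^{-2}\,E\bigl(V_1^2\mathbbm{1}_{\{\abs{V_1}\le n\}}\bigr)$, and on the event $\{\abs{V_1}\le n\}$ I would write $V_1^2=\abs{V_1}^{1+\epsilon}\abs{V_1}^{1-\epsilon}\le n^{1-\epsilon}\abs{V_1}^{1+\epsilon}$ (using $\epsilon\le 1$), giving $E(V_1^2\mathbbm{1}_{\{\abs{V_1}\le n\}})\le n^{1-\epsilon}\,E(\abs{V_1}^{1+\epsilon})$.

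With these two moment estimates in hand, both the centering and the variance series are dominated by $E(\abs{V_1}^{1+\epsilon})\sum_n n^{-(1+\epsilon)}$, which converges since $\epsilon>0$. All three series of the three-series theorem therefore converge, and I would conclude that $\sum_n V_n/n$ converges a.s. Equivalently, one can split off the centering: after truncation, Borel--Cantelli (via the tail series) shows the truncated and untruncated sums differ by finitely many terms a.s.; the centering series converges absolutely by the bound above; and Kolmogorov's one-series theorem handles the centered truncated variables using the variance bound.

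The main obstacle, and the real content of the lemma, is precisely the regime $0<\epsilon\le1$, where $V_1$ need not have finite variance and a direct appeal to the one-series theorem for $V_n/n$ fails. The crux is the pair of interpolation bounds $E(\abs{V_1}\mathbbm{1}_{\{\abs{V_1}>n\}})\le n^{-\epsilon}E(\abs{V_1}^{1+\epsilon})$ and $E(V_1^2\mathbbm{1}_{\{\abs{V_1}\le n\}})\le n^{1-\epsilon}E(\abs{V_1}^{1+\epsilon})$: truncating at the scale $n$ is exactly what makes both the centering and the variance series comparable to the convergent series $\sum_n n^{-(1+\epsilon)}$. Once this matching of the truncation scale to the moment assumption is seen, the remaining estimates are routine.
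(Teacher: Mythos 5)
Your proof is correct and takes essentially the same route as the paper: the paper carries out the sufficiency half of the three-series theorem by hand (Borel--Cantelli for the tail events $\{\abs{V_n}>n\}$, the one-series theorem for the centered truncated variables, and absolute convergence of the centering series via $E(W_n)=-E\bigl\{V_1\,1_{\{\abs{V_1}>n\}}\bigr\}$), using exactly your two interpolation bounds against $n^{-(1+\epsilon)}$. The only cosmetic difference is that you make the harmless reduction to $\epsilon\le 1$ explicit, which the paper leaves implicit.
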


\begin{proof}
Since
$$\sum_nP(\abs{V_n}>n)=\sum_nP(\abs{V_1}>n)\le E\bigl(\abs{V_1}\bigr)<\infty,$$
the Borel-Cantelli lemma yields
$$P(\abs{V_n}>n\text{ for infinitely many }n)=0.$$
Hence, it suffices to show that $\sum_n\frac{W_n}{n}$ converges a.s., where
$$W_n=V_n\,1_{\{\abs{V_n}\le n\}}.$$
To this end, first note that $\frac{W_n-E(W_n)}{n}$ are independent, centered, and
$$\sum_nE\left\{\left(\frac{W_n-E(W_n)}{n}\right)^2\right\}\le\sum_nE\left\{\frac{V_1^2\,1_{\{\abs{V_1}\le n\}}}{n^2}\right\}\le\sum_n\frac{E\bigl(\abs{V_1}^{1+\epsilon}\bigr)}{n^{1+\epsilon}}<\infty.$$
Hence, the series $\sum_n\frac{W_n-E(W_n)}{n}$ converges a.s. Finally, $E(V_1)=0$ implies
$$E(W_n)=E\bigl\{V_1\,1_{\{\abs{V_1}\le n\}}\bigr\}=-E\bigl\{V_1\,1_{\{\abs{V_1}> n\}}\bigr\}.$$
Therefore,
$$\sum_n\frac{\abs{E(W_n)}}{n}\le\sum_n\frac{E\bigl\{\abs{V_1}\,1_{\{\abs{V_1}> n\}}\bigr\}}{n}\le\sum_n\frac{E\bigl(\abs{V_1}^{1+\epsilon}\bigr)}{n^{1+\epsilon}}<\infty.$$
This concludes the proof.
\end{proof}

\medskip

\noindent Both the previous lemmas are well known. A possible reference for Lemma \ref{ji87c4} is \cite{DZ07}, while Lemma \ref{7yu8} has been proved for we do not know of any explicit reference. Incidentally, in Lemma \ref{7yu8}, the condition $E\bigl(\abs{V_1}^{1+\epsilon}\bigr)<\infty$ can be possibly weakened but not completely removed. In fact, as shown in the next example, the series $\sum_{n=1}^\infty\frac{V_n}{n}$ may fail to converge a.s. even if $(V_n)$ is i.i.d. and $E(V_1)=0$.

\medskip

\begin{example}
Let $(V_n)$ be i.i.d. with
$$P\bigl(V_1=-c/\log 2\bigr)=1/2\quad\text{and}\quad P(2<V_1\le x)=\frac{c}{2}\,\int_2^x\frac{dt}{(t\,\log t)^2}\quad\text{for }x>2,$$
where the constant $c$ is given by $c=\Bigl(\int_2^\infty\frac{dt}{(t\,\log t)^2}\Bigr)^{-1}$. Then,
$$E(V_1)=-\left\{\frac{c}{2\,\log 2}+\frac{c}{2}\right\},\int_2^\infty\frac{t}{(t\,\log t)^2}\,dt=0.$$
Letting
$$W_n=\frac{V_n}{n}\,1_{\{\abs{V_n}\le n\}},$$
a direct calculation shows that $\sum_nE(W_n)=-\infty$ and $\sum_n\text{Var}(W_n)<\infty$. Hence,
$$\sum_{n=1}^\infty W_n\overset{a.s.}=-\infty.$$
In turn, by the Borel-Cantelli Lemma, this implies $\sum_{n=1}^\infty\frac{V_n}{n}\overset{a.s.}=-\infty.$
\end{example}

\noindent Finally, a remark is in order.

\medskip

\begin{remark}\label{vf67mju8}
Fix an i.i.d. sequence $Z_1,Z_2,\ldots$ of $p$-dimensional random vectors such that $Z_1\sim\mathcal{L}(0,I)$. Define a sequence $Y=(Y_1,Y_2,\ldots)$ as
\begin{gather*}
Y_1=Z_1,\quad Y_2=Y_1+Z_2\quad\text{and}\quad Y_{n+1}=N_n+R_n^{1/2}\,\,Z_{n+1}\quad\text{for }n\ge 2
\\\text{where}\quad N_n=\frac{1}{n}\,\sum_{i=1}^nY_i\quad\text{and}\quad R_n=\frac{1}{n}\,\sum_{i=1}^n(Y_i-N_n)(Y_i-N_n)^t.
\end{gather*}
It is straightforward to see that
\begin{gather*}
P(Y_1\in\cdot)=\mathcal{L}(0,I),\quad P(Y_2\in\cdot\mid Y_1)=\mathcal{L}(Y_1,I),\quad\text{and}
\\P(Y_{n+1}\in\cdot\mid Y_1,\ldots,Y_n)=\mathcal{L}(N_n,\,R_n)\quad\quad\text{for }n\ge 2.
\end{gather*}
Hence, $Y\sim X$ whenever the predictive distributions of $X$ are the $\alpha_n$ of Theorems \ref{q2098nfer} and \ref{de4v6}. In particular, since $Y\sim X$, we can use $Y$ instead of $X$ when proving Theorems \ref{q2098nfer} and \ref{de4v6}. Equivalently, in the proofs of Theorems \ref{q2098nfer} and \ref{de4v6}, we can suppose
\begin{gather*}
X_1=Z_1,\quad X_2=X_1+Z_2\quad\text{and}\quad X_{n+1}=M_n+Q_n^{1/2}\,\,Z_{n+1}\quad\text{for }n\ge 2
\end{gather*}
where  $Z_1,Z_2,\ldots$ are i.i.d. and $Z_1\sim\mathcal{L}(0,I)$.
\end{remark}

\medskip

\noindent We are now able to attack the results of Section \ref{r4es3}. To this end, we let
$$\mathcal{F}_n=\sigma(Z_1,\ldots,Z_n)\quad\text{with}\quad\mathcal{F}_0=\,\text{the trivial $\sigma$-field.}$$

\medskip

\begin{proof}[\textbf{Proof of Theorem \ref{q2098nfer}}]
Suppose $M_n\overset{a.s.}\longrightarrow M$, $Q_n\overset{a.s.}\longrightarrow Q$ and $Q\in\mathcal{M}$ a.s. Let $\phi$ be a density of $\mathcal{L}(0,I)$ with respect to $p$-dimensional Lebesgue measure. Since $Q_n\in\mathcal{M}$ a.s. (as it is easily proved), one obtains
\begin{gather*}
\norm{\alpha_n-\alpha}=\norm{\mathcal{L}(M_n,Q_n)-\mathcal{L}(M,Q)}
\\\overset{a.s.}=\frac{1}{2}\,\int_{\mathbb{R}^p}\Abs{\frac{\phi\bigl(Q_n^{-1/2}(x-M_n)\bigr)}{\text{det}(Q_n)^{1/2}}-\frac{\phi\bigl(Q^{-1/2}(x-M)\bigr)}{\text{det}(Q)^{1/2}}}\,dx.
\end{gather*}
Given $\epsilon>0$, take an integrable continuous function $f$ on $\mathbb{R}^p$ such that
$$\int_{\mathbb{R}^p}\abs{\phi(x)-f(x)}\,dx<\epsilon.$$
Define
\begin{gather*}
I_n=\int_{\mathbb{R}^p}\Abs{\frac{f\bigl(Q_n^{-1/2}(x-M_n)\bigr)}{\text{det}(Q_n)^{1/2}}-\frac{f\bigl(Q^{-1/2}(x-M)\bigr)}{\text{det}(Q)^{1/2}}}\,dx.
\end{gather*}
Since $f$ is continuous, $I_n\overset{a.s.}\longrightarrow 0$. Therefore,
\begin{gather*}
\limsup_n\,\norm{\alpha_n-\alpha}\le\epsilon+\frac{1}{2}\,\limsup_nI_n\overset{a.s.}=\epsilon.
\end{gather*}
This proves that $\norm{\alpha_n-\alpha}\overset{a.s.}\longrightarrow 0$ provided $M_n\overset{a.s.}\longrightarrow M$, $Q_n\overset{a.s.}\longrightarrow Q$ and $Q\in\mathcal{M}$ a.s.

\medskip

\noindent After noting this fact, the rest of the proof is split into three steps. Recall that, by Remark \ref{vf67mju8}, we can assume
$$X_1=Z_1,\quad X_2=X_1+Z_2\quad\text{and}\quad X_{n+1}=M_n+Q_n^{1/2}\,\,Z_{n+1}\quad\text{for }n\ge 2$$
where  $Z_1,Z_2,\ldots$ are i.i.d. and $Z_1\sim\mathcal{L}(0,I).$

\medskip

\noindent\textbf{Step 1: $Q_n$ converges a.s.}

\medskip

\noindent Define
$$M_0=0,\quad Q_0=Q_1=I\quad\text{and}\quad L_n=Q_n^{1/2}Z_{n+1}.$$
Then,
$$M_{n+1}-M_n=\frac{Q_n^{1/2}\,Z_{n+1}}{n+1}=\frac{L_n}{n+1}.$$
After some algebra, this implies
\begin{gather}\label{bh87f4r}
Q_{n+1}=\frac{n}{n+1}\,Q_n+\frac{n}{(n+1)^2}\,L_nL_n^t
\end{gather}
or equivalently
$$Q_{n+1}^{(i,j)}=\frac{n}{n+1}\,Q_n^{(i,j)}+\frac{n}{(n+1)^2}\,L_n^{(i)}\,L_n^{(j)}\quad\quad\text{for all }i,\,j=1,\ldots,p.$$
The conditional distribution of $L_n$ given $\mathcal{F}_n$ is $\mathcal{L}(0,Q_n)$. Therefore,
\begin{gather*}
E\bigl\{L_n^{(i)}\,L_n^{(j)}\mid\mathcal{F}_n\bigr\}=Q_n^{(i,j)}
\end{gather*}
and
\begin{gather*}
E\bigl\{Q_{n+1}^{(i,j)}\mid\mathcal{F}_n\bigr\}=\frac{n}{n+1}\,Q_n^{(i,j)}+\frac{n}{(n+1)^2}\,Q_n^{(i,j)}=Q_n^{(i,j)}\,\left(1-\frac{1}{(n+1)^2}\right).
\end{gather*}
For $i=j$, it follows that $(Q_n^{(i,i)}:n\ge 2)$ is a non-negative supermartingale. Hence,
$$E\bigl\{\abs{Q_n^{(i,j)}}\bigr\}\le\frac{E\bigl\{Q_n^{(i,i)}\bigr\}+E\bigl\{Q_n^{(j,j)}\bigr\}}{2}\le\frac{E\bigl\{Q_2^{(i,i)}\bigr\}+E\bigl\{Q_2^{(j,j)}\bigr\}}{2}$$
and
\begin{gather*}
\sum_nE\Abs{E\bigl\{Q_{n+1}^{(i,j)}\mid\mathcal{F}_n\bigr\}-Q_n^{(i,j)}}=\sum_n\frac{E\bigl\{\abs{Q_n^{(i,j)}}\bigr\}}{(n+1)^2}
\\\le\frac{E\bigl\{Q_2^{(i,i)}\bigr\}+E\bigl\{Q_2^{(j,j)}\bigr\}}{2}\,\sum_n\frac{1}{(n+1)^2}<\infty.
\end{gather*}
To sum up, for fixed $i$ and $j$, the sequence $(Q_n^{(i,j)}:n\ge 2)$ is a quasi-martingale such that $\sup_n\,E\bigl\{\abs{Q_n^{(i,j)}}\bigr\}<\infty$, and this implies $Q_n^{(i,j)}\overset{a.s.}\longrightarrow Q^{(i,j)}$ for some random variable $Q^{(i,j)}$. Hence, $Q_n\overset{a.s.}\longrightarrow Q$ where $Q$ is the matrix $Q=\bigl(Q^{(i,j)}:1\le i,\,j\le p\bigr)$.

\medskip

\noindent\textbf{Step 2: $M_n$ is a martingale and it is bounded in $L_2$.}

\medskip

\noindent Just note that
\begin{gather*}
E\bigl(M_{n+1}-M_n\mid\mathcal{F}_n\bigr)=E\left\{\frac{Q_n^{1/2}Z_{n+1}}{n+1}\mid\mathcal{F}_n\right\}
\\=\frac{Q_n^{1/2}E(Z_{n+1}\mid\mathcal{F}_n)}{n+1}=\frac{Q_n^{1/2}E(Z_{n+1})}{n+1}=0.
\end{gather*}
Hence, $M_n$ is a martingale. In addition,
\begin{gather*}
E(M_n^tM_n)=E\left\{\left(\sum_{i=1}^n\frac{Q_{i-1}^{1/2}\,Z_i}{i}\right)^t\left(\sum_{j=1}^n\frac{Q_{j-1}^{1/2}\,Z_j}{j}\right)\right\}
=\sum_{i=1}^n\frac{E(Z_i^t\,Q_{i-1}\,Z_i)}{i^2}\\=\sum_{i=1}^n\,\,\sum_{r,k=1}^p\frac{E(Z_i^{(r)}\,Z_i^{(k)})\,E(Q_{i-1}^{(r,k)})}{i^2}
=\sum_{i=1}^n\sum_{k=1}^p\frac{E(Q_{i-1}^{(k,k)})}{i^2}
\\\le\sum_{i=1}^\infty\sum_{k=1}^p\frac{E(Q_{i-1}^{(k,k)})}{i^2}\le p + \frac{p}{4} + \sum_{k=1}^pE(Q_2^{(k,k)})\,\sum_{i=3}^\infty\frac{1}{i^2}
\end{gather*}
where the last inequality is because $(Q_n^{(k,k)}:n\ge 2)$ is a supermartingale for fixed $k=1,\ldots,p$. Therefore, $M_n$ is a martingale such that $\sup_nE(M_n^tM_n)<\infty$, and this implies that $M_n\overset{a.s.}\longrightarrow M$ for some $p$-dimensional random vector $M$.

\medskip

\noindent\textbf{Step 3: $Q\in\mathcal{M} $ a.s.}

\medskip

\noindent Since $Q\overset{a.s.}=\lim_nQ_n$, the matrix $Q$ is symmetric and non-negative definite a.s. Hence, to prove $Q\in\mathcal{M} $ a.s., it suffices to show that det$(Q)>0$ a.s.

\medskip

\noindent Exploiting \eqref{bh87f4r} and Lemma \ref{ji87c4}, one obtains
\begin{gather*}
\text{det}(Q_{n+1})=\text{det}\left(\frac{n}{n+1}\,Q_n^{1/2}\,\left(I+\frac{1}{(n+1)}\,Z_{n+1}Z_{n+1}^t\right)\,Q_n^{1/2}\right)
\\=\text{det}(Q_n)\,\left(1-\frac{1}{n+1}\right)^p\,\left(1+\frac{1}{n+1}\,Z_{n+1}^tZ_{n+1}\right).
\end{gather*}
Letting $V_n=Z_n^tZ_n-p$, the above equation can be written as
\begin{gather*}
\text{det}(Q_{n+1})=\text{det}(Q_n)\,\sum_{j=0}^p\left(
                                                   \begin{array}{c}
                                                     p \\
                                                     j \\
                                                   \end{array}
                                                 \right)
\,\left(\frac{-1}{n+1}\right)^j\,\left(1+\frac{V_{n+1}+p}{n+1}\right)
\\=\text{det}(Q_n)\,\left\{1+\frac{V_{n+1}}{n+1}-p\,\frac{V_{n+1}+p}{(n+1)^2}+R_{n+1}\right\}
\end{gather*}
where
$$R_{n+1}=\sum_{j=2}^p\left(
                                                   \begin{array}{c}
                                                     p \\
                                                     j \\
                                                   \end{array}
                                                 \right)
\,\left(\frac{-1}{n+1}\right)^j\,\left(1+\frac{V_{n+1}+p}{n+1}\right).$$
Therefore,
$$\text{det}(Q_n)=\text{det}(Q_2)\,\prod_{j=3}^n\,\frac{\text{det}(Q_j)}{\text{det}(Q_{j-1})}=\text{det}(Q_2)\,\prod_{j=3}^n\left\{1+\frac{V_j}{j}-p\,\frac{V_j+p}{j^2}+R_j\right\}.$$

\medskip

\noindent Next, define
$$U_j=1+\frac{V_j}{j}-p\,\frac{V_j+p}{j^2}+R_j.$$
If $\sum_{j=3}^n\log U_j\overset{a.s.}\longrightarrow T$ as $n\rightarrow\infty$, for some real random variable $T$, then
$$\text{det}(Q)=\lim_n\text{det}(Q_n)=\text{det}(Q_2)\,\lim_n\,\exp\Bigl(\,\sum_{j=3}^n\log U_j\Bigr)=\text{det}(Q_2)\,e^T>0\quad\quad\text{a.s.}$$
Hence, it suffices to show that the series $\sum_j\log U_j$ converges a.s. In turn, this is true since each series involved in the definition of $U_j$ converges a.s. Precisely, $\sum_j\frac{V_j}{j}$ converges a.s. because of Lemma \ref{7yu8}. The series $\sum_j\frac{V_j+p}{j^2}$ converges a.s. since $V_j+p=Z_j^tZ_j\ge 0$ and
$$E\left\{\sum_j\frac{V_j+p}{j^2}\right\}=\sum_j\,\frac{E(Z_j^tZ_j)}{j^2}=\sum_j\frac{p}{j^2}<\infty.$$
Finally, $\sum_jR_j$ converges a.s. by exactly the same argument used for $\sum_j\frac{V_j+p}{j^2}$. Therefore, $\sum_j\log U_j$ converges a.s., and this concludes the proof.

\end{proof}

\bigskip

\begin{proof}[\textbf{Proof of Theorem \ref{de4v6}}]
Let $\norm{\cdot}_E$ be the Euclidean norm on $\mathbb{R}^p$ and $\norm{\cdot}_F$ the Frobenius norm on the matrices of order $p\times p$. The latter is defined as
$$\norm{B}_F=\sqrt{\sum_{i,j}\bigl(B^{(i,j)}\bigr)^2}=\sqrt{\text{trace}\,(B^tB)}\quad\quad\text{for any }p\times p\text{ matrix }B$$
and has the nice property that $\norm{B_1B_2}_F\le\norm{B_1}_F\,\norm{B_2}_F$.

\medskip

\noindent The total variation distance between two $p$-dimensional normal laws, say $\mathcal{N}(a,\Sigma_1)$ and $\mathcal{N}(b,\Sigma_2)$ with $a,\,b\in\mathbb{R}^p$ and $\Sigma_1,\,\Sigma_2\in\mathcal{M}$, can be estimated as
$$\norm{\mathcal{N}(a,\Sigma_1)-\mathcal{N}(b,\Sigma_2)}\le\norm{\mathcal{N}(a,\Sigma_1)-\mathcal{N}(b,\Sigma_1)}+\frac{3}{2}\,\norm{\Sigma_1^{-1/2}\Sigma_2\Sigma_1^{-1/2}-I}_F;$$
see e.g. \cite[Theorem 2]{KELB} and \cite[Formula (2)]{DMR}. In addition,
$$\norm{\Sigma_1^{-1/2}\Sigma_2\Sigma_1^{-1/2}-I}_F=\norm{\Sigma_1^{-1/2}(\Sigma_2-\Sigma_1)\Sigma_1^{-1/2}}_F\le\norm{\Sigma_1^{-1/2}}_F^2\,\norm{\Sigma_2-\Sigma_1}_F$$
and
$$\norm{\mathcal{N}(a,\Sigma_1)-\mathcal{N}(b,\Sigma_1)}\le\norm{b-a}_E\,\norm{\Sigma_1^{-1/2}}_F;$$
see \cite[Page 5]{DMR}. Collecting all these facts together,
$$\norm{\mathcal{N}(a,\Sigma_1)-\mathcal{N}(b,\Sigma_2)}\le C(\Sigma_1)\,\Bigl\{\norm{b-a}_E+\norm{\Sigma_2-\Sigma_1}_F\Bigr\}$$
where $C(\Sigma_1)$ is a constant which depends on $\Sigma_1$ only.

\medskip

\noindent Applying the previous inequality to our framework, one obtains
$$\norm{\alpha_n-\alpha}=\norm{\mathcal{N}(M_n,Q_n)-\mathcal{N}(M,Q)}\le C(Q)\,\Bigl\{\norm{M_n-M}_E+\norm{Q_n-Q}_F\Bigr\}.$$
Hence, it suffices to show that
$$b:=\sup_n\sqrt{n}\,E\Bigl\{\norm{M_n-M}_E+\norm{Q_n-Q}_F\Bigr\}<\infty.$$
In fact, if $b<\infty$ and $d_n$ is any sequence of constants such that $\frac{d_n}{\sqrt{n}}\rightarrow 0$, then
$$d_n\,E\Bigl\{\norm{M_n-M}_E+\norm{Q_n-Q}_F\Bigr\}\le\frac{b\,d_n}{\sqrt{n}}\longrightarrow 0.$$
In particular, $d_n\,\Bigl\{\norm{M_n-M}_E+\norm{Q_n-Q}_F\Bigr\}\overset{P}\longrightarrow 0$, which in turn implies
$$d_n\,\norm{\alpha_n-\alpha}\le d_n\,C(Q)\,\Bigl\{\norm{M_n-M}_E+\norm{Q_n-Q}_F\Bigr\}\overset{P}\longrightarrow 0.$$

\medskip

\noindent To prove $b<\infty$, we first note that
$$M-M_n=\sum_{i=n}^\infty(M_{i+1}-M_i)=\sum_{i=n}^\infty\frac{Q_i^{1/2}Z_{i+1}}{i+1}.$$
Therefore, for each $n\ge 2$,
\begin{gather*}
E\Bigl\{\norm{M_n-M}_E\Bigr\}^2\le E\Bigl\{\norm{M_n-M}_E^2\Bigr\}=E\Bigl\{(M-M_n)^t(M-M_n)\Bigr\}
\\=\sum_{i=n}^\infty\frac{E(Z_{i+1}^t\,Q_i\,Z_{i+1})}{(i+1)^2}=\sum_{i=n}^\infty\,\,\sum_{r,\,k=1}^p\frac{E(Z_{i+1}^{(r)}\,Z_{i+1}^{(k)})\,E(Q_i^{(r,k)})}{(i+1)^2}
\\=\sum_{i=n}^\infty\,\,\sum_{k=1}^p\frac{E(Q_i^{(k,k)})}{(i+1)^2}\le\sum_{k=1}^pE(Q_2^{(k,k)})\,\sum_{i=n}^\infty\,\,\frac{1}{(i+1)^2}
\end{gather*}
where the last inequality is because $\{Q_i^{(k,k)}:i\ge 2\}$ is a supermartingale for fixed $k$. On noting that $\sum_{i=n}^\infty\,\,\frac{1}{(i+1)^2}\le\frac{1}{n}$, one obtains
$$\sqrt{n}\,E\Bigl\{\norm{M_n-M}_E\Bigr\}\le\sqrt{\sum_{k=1}^pE(Q_2^{(k,k)})}.$$
Similarly, recalling that $Q_{n+1}=\frac{n}{n+1}\,Q_n^{1/2}\bigl(I+\frac{Z_{n+1}Z_{n+1}^t}{n+1}\bigr)Q_n^{1/2}$, one obtains
\begin{gather*}
Q-Q_n=\sum_{i=n}^\infty(Q_{i+1}-Q_i)
\\=\sum_{i=n}^\infty\left\{\frac{Q_i^{1/2}(Z_{i+1}Z_{i+1}^t-I)Q_i^{1/2}}{i+1}-\frac{Q_i^{1/2}Z_{i+1}Z_{i+1}^tQ_i^{1/2}}{(i+1)^2}\right\}.
\end{gather*}
Define
$$q=\sup_iE\bigl\{\norm{Q_i^{1/2}}_F^4\bigr\}\quad\text{and}\quad K_i=\frac{Q_i^{1/2}(Z_{i+1}Z_{i+1}^t-I)Q_i^{1/2}}{i+1}.$$
Since $E\bigl\{\norm{Q_i^{1/2}}_F^2\bigr\}\le\sqrt{q}$ for all $i$,
\begin{gather*}
\sum_{i=n}^\infty\frac{E\bigl\{\norm{Q_i^{1/2}Z_{i+1}Z_{i+1}^tQ_i^{1/2}}_F\bigr\}}{(i+1)^2}\le\sum_{i=n}^\infty\frac{E\bigl\{\norm{Z_{i+1}Z_{i+1}^t}_F\bigr\}\,E\bigl\{\norm{Q_i^{1/2}}_F^2\bigr\}}{(i+1)^2}
\\\le E\bigl\{\norm{Z_1Z_1^t}_F\bigr\}\,\sum_{i=n}^\infty\frac{\sqrt{q}}{(i+1)^2}\le\frac{\sqrt{q}}{n}\,E\bigl\{\norm{Z_1Z_1^t}_F\bigr\}.
\end{gather*}
Since $K_i$ is symmetric and $E(K_iK_j)=0$ for $i\ne j$,
\begin{gather*}
E\Bigl\{\norm{\sum_{i=n}^\infty K_i}_F^2\Bigr\}= E\left\{\text{trace}\left(\Bigl(\sum_{i=n}^\infty K_i\Bigr)\,\Bigl(\sum_{j=n}^\infty K_j\Bigr)\right)\right\}
\\=\text{trace}\Bigl(\,\sum_{i,j=n}^\infty E(K_iK_j)\Bigr)=\text{trace}\Bigl(\,\sum_{i=n}^\infty E(K_iK_i)\Bigr)
\\=\sum_{i=n}^\infty E\left\{\frac{\norm{Q_i^{1/2}(Z_{i+1}Z_{i+1}^t-I)Q_i^{1/2}}_F^2}{(i+1)^2}\right\}
\\\le E\bigl\{\norm{Z_1Z_1^t-I}_F^2\bigr\}\,\sum_{i=n}^\infty\frac{E\bigl\{\norm{Q_i^{1/2}}_F^4\bigr\}}{(i+1)^2}\le E\bigl\{\norm{Z_1Z_1^t-I}_F^2\bigr\}\,\frac{q}{n}.
\end{gather*}
Hence,
\begin{gather*}
\sqrt{n}\,E\bigl\{\norm{Q_n-Q}_F\bigr\}\le \sqrt{n}\,\sum_{i=n}^\infty\frac{E\bigl\{\norm{Q_i^{1/2}Z_{i+1}Z_{i+1}^tQ_i^{1/2}}_F\bigr\}}{(i+1)^2}+\sqrt{n}\,E\bigl\{\norm{\sum_{i=n}^\infty K_i}_F\bigr\}
\\\le\frac{\sqrt{q}}{\sqrt{n}}\,E\bigl\{\norm{Z_1Z_1^t}_F\bigr\}+\sqrt{q\,E\bigl\{\norm{Z_1Z_1^t-I}_F^2\bigr\}}.
\end{gather*}

\medskip

\noindent Hence, $b<\infty$ provided $q<\infty$. We finally prove $q<\infty$.

\medskip

\noindent Fix $i\ge 2$, $1\le k\le p$, and define $W_i=\bigl(Q_i^{(k,k)}\bigr)^2$. As noted in the proof of Theorem \ref{q2098nfer}, the conditional distribution of $L_j=Q_j^{1/2}Z_{j+1}$ given $\mathcal{F}_j$ is $\mathcal{N}(0,Q_j)$. Hence,
$$E\bigl\{\bigl(L_j^{(k)})^2\mid\mathcal{F}_j\bigr\}=Q_j^{(k,k)}\quad\text{and}\quad E\bigl\{\bigl(L_j^{(k)})^4\mid\mathcal{F}_j\bigr\}=3\,\bigl(Q_j^{(k,k)}\bigr)^2=3\,W_j.$$
Using \eqref{bh87f4r}, it follows that
$$E(W_{j+1}\mid \mathcal{F}_j)=W_j\,\left\{\frac{j^2}{(j+1)^2}+\frac{3\,j^2}{(j+1)^4}+\frac{2\,j^2}{(j+1)^3}\right\}\le W_j.$$
Therefore,
\begin{gather*}
E(W_i)=E\left\{W_2\,\prod_{j=2}^{i-1}\frac{W_{j+1}}{W_j}\right\}=E(W_2)\,\prod_{j=2}^{i-1}E\left\{\frac{W_{j+1}}{W_j}\right\}
\\=E(W_2)\,\prod_{j=2}^{i-1}E\left\{\frac{E(W_{j+1}\mid\mathcal{F}_j)}{W_j}\right\}\le E(W_2).
\end{gather*}
To sum up, $E\bigl\{\bigl(Q_i^{(k,k)}\bigr)^2\bigr\}\le E\bigl\{\bigl(Q_2^{(k,k)}\bigr)^2\bigr\}$ for all $i\ge 2$ and $1\le k\le p$. Hence,
\begin{gather*}
E\bigl\{\norm{Q_i^{1/2}}_F^4\bigr\}=E\bigl\{\text{trace}(Q_i)^2\bigr\}=\sum_{r,k=1}^pE\bigl\{Q_i^{(r,r)}Q_i^{(k,k)}\bigr\}
\\\le\sum_{r,k=1}^p\sqrt{E\bigl\{\bigl(Q_i^{(r,r)}\bigr)^2\bigr\}\,E\bigl\{\bigl(Q_i^{(k,k)}\bigr)^2\bigr\}}
\\\le p^2\max_{1\le k\le p}E\bigl\{\bigl(Q_i^{(k,k)}\bigr)^2\bigr\}\le p^2\max_{1\le k\le p}E\bigl\{\bigl(Q_2^{(k,k)}\bigr)^2\bigr\}.
\end{gather*}
This proves that $q<\infty$ and concludes the proof of the theorem.
\end{proof}

\bigskip

\noindent Let us turn to the results of Section \ref{x6h8k1q}. In such results, the predictives of $X$ are the copula-based predictive distributions $\beta_n$. Hence, as noted in Section \ref{x6h8k1q}, $X$ is conditionally identically distributed.

\medskip

\begin{proof}[\textbf{Proof of Theorem \ref{g7j09i}}]
As in Example \ref{bh78u}, define
$$D_n=\int_{-\infty}^\infty\,\Abs{f_{n+1}\bigl(x\mid X(n+1)\bigr)-f_n\bigl(x\mid X(n)\bigr)}\,dx.$$
For $n<m$,
\begin{gather*}
\int_{-\infty}^\infty\,\Abs{f_n\bigl(x\mid X(n)\bigr)-f_m\bigl(x\mid X(m)\bigr)}\,dx
\\=\int_{-\infty}^\infty\,\Abs{\,\sum_{i=n}^{m-1}\Bigl\{f_{i+1}\bigl(x\mid X(i+1)\bigr)-f_i\bigl(x\mid X(i)\bigr)\Bigr\}}\,dx\le\sum_{i=n}^{m-1}D_i.
\end{gather*}
If $\sum_nD_n<\infty$ a.s., the sequence $f_n\bigl(\cdot\mid X(n)\bigr)$ is Cauchy in the $L^1$-norm a.s., so that it converges in the $L^1$-norm a.s. Hence, it suffices to show that $\sum_nD_n<\infty$ a.s. To this end, recall that $D_n$ is distributed as $r_n\int_0^1 \abs{c_n(u,\,V)-1}\,du$ where $V$ is any random variable with uniform distribution on $(0,1)$; see Example \ref{bh78u}. It follows that
\begin{gather*}
E(D_n)=r_n\,E\left\{\int_0^1 \abs{c_n(u,\,V)-1}\,du\right\}=r_n\,\int_0^1 E\bigl\{\abs{c_n(u,\,V)-1}\bigr\}\,du
\\=r_n\,\int_0^1\int_0^1\abs{c_n(u,v)-1}\,dv\,du\le 2\,r_n.
\end{gather*}
Therefore, $\sum_nE(D_n)\le 2\,\sum_nr_n<\infty$ which in turn implies $\sum_nD_n<\infty$ a.s.
\end{proof}

\medskip

\begin{proof}[\textbf{Proof of Theorem \ref{r58uh6}}]
Since $X$ is conditionally identically distributed, by \cite[Theorem 4]{AOP13}, it suffices to show that
\begin{gather}\label{v6yh9m}
\sup_nE\left\{\,\int_{-\infty}^\infty\,f_n\bigl(x\mid X(n)\bigr)^2\,dx\right\}<\infty.
\end{gather}
To prove \eqref{v6yh9m}, we note that, since $X$ is conditionally identically distributed,
$$E\Bigl\{f_{k+1}\bigl(x\mid X(k+1)\bigr)\mid X(k)\Bigr\}=f_k\bigl(x\mid X(k)\bigr)$$
for almost every $x\in\mathbb{R}$ (with respect to Lebesgue measure). This implies
$$E\left\{c_k\Bigl(F_k\bigl(x\mid X(k)\bigr),\,F_k\bigl(X_{k+1}\mid X(k)\bigr)\Bigr)\mid X(k)\right\}=1$$
for almost every $x\in\mathbb{R}$. Since $c_k(u,v)\le b$ for all $u,\,v\in [0,1]$, it follows that
\begin{gather*}
E\left\{\left(\frac{f_{k+1}\bigl(x\mid X(k+1)\bigr)}{f_k\bigl(x\mid X(k)\bigr)}\right)^2\mid X(k)\right\}\le (1-r_k)^2+r_k^2\,b^2\,+2\,(1-r_k)\,r_k
=1+r_k^2(b^2-1).
\end{gather*}
Finally, the previous inequality implies
\begin{gather*}
E\left\{f_n\bigl(x\mid X(n)\bigr)^2\right\}=E\left\{\left( f_0(x)\,\prod_{k=0}^{n-1}\frac{f_{k+1}\bigl(x\mid X(k+1)\bigr)}{f_k\bigl(x\mid X(k)\bigr)}\right)^2\right\}
\\\le f_0(x)^2\,\prod_{k=0}^{n-1}\bigl\{1+r_k^2(b^2-1)\bigr\}\le f_0(x)^2\,\exp\bigl\{(b^2-1)\,\sum_{k=0}^{n-1}r_k^2\bigr\}.
\end{gather*}
Thus, to check condition \eqref{v6yh9m}, it suffices noting that
\begin{gather*}
E\left\{\,\int_{-\infty}^\infty\,f_n\bigl(x\mid X(n)\bigr)^2\,dx\right\}=\int_{-\infty}^\infty E\left\{f_n\bigl(x\mid X(n)\bigr)^2\right\}\,dx
\\\le\exp\bigl\{(b^2-1)\,\sum_{k=0}^\infty r_k^2\bigr\}\,\,\int_{-\infty}^\infty f_0(x)^2dx.
\end{gather*}
\end{proof}

\medskip


\begin{thebibliography}{99}

\bibitem{AZZCAP} Azzalini A., Capitanio A. (2003) Distributions generated by perturbation of symmetry with emphasis on a multivariate skew $t$ distribution, {\em J. Royal Stat. Soc.} B, 65, 367–389.

\bibitem{AOP04} Berti P., Pratelli L., Rigo P. (2004) Limit theorems for a class of identically distributed random variables, {\em Ann. Probab.}, 32, 2029-2052.

\bibitem{AOP13} Berti P., Pratelli L., Rigo P. (2013) Exchangeable sequences driven by an absolutely continuous random measure, {\em Ann. Probab.}, 41, 2090-2102.

\bibitem{BERN21} Berti P., Dreassi E., Pratelli L., Rigo P. (2021) A class of models for Bayesian predictive inference, {\em Bernoulli}, 27, 702-726.

\bibitem{STATSIN23} Berti P., Dreassi E., Leisen F., Pratelli L., Rigo P. (2023) Bayesian predictive inference without a prior, {\em Statistica Sinica}, 33, 2405-2429.

\bibitem{STS23} Berti P., Dreassi E., Leisen F., Pratelli L., Rigo P. (2024) A probabilistic view on predictive constructions for Bayesian learning, {\em Statist. Science}, to appear.

\bibitem{CFZ} Clarke B., Fokoue E., Zhang H.H. (2009) {\em Principles and theory
for data mining and machine learning}, Springer, New York.

\bibitem{DAW} Dawid A.P. (1984) Present position and potential developments:
Some personal views: Statistical Theory: The prequential approach, {\em J. Royal Stat. Soc.} A, 147, 278-292.

\bibitem{DAWVOVK} Dawid A.P., Vovk V.G. (1999) Prequential probability: principles and properties, {\em Bernoulli}, 5, 125-162.

\bibitem{DMR} Devroye L., Mehrabian A., Reddad T. (2023) The total variation distance between high-dimensional Gaussians with the same mean, {\em arXiv:1810.08693v7}

\bibitem{DZ07} Ding J., Zhou A. (2007) Eigenvalues of rank-one updated matrices with some applications, {\em  Appl. Math. Letters}, 20, 1223-1226.

\bibitem{DSGS} Dutordoir V., Saul A., Ghahramani Z., Simpson F. (2023) Neural diffusion processes, {\em Proc. 40th Intern. Conf. on Machine Learning}, PMLR 202, 8990-9012.

\bibitem{EFRON} Efron B. (2020) Prediction, estimation, and attribution, {\em J.A.S.A.}, 115, 636-655.

\bibitem{FHW23} Fong E., Holmes C., Walker S.G. (2023) Martingale posterior distributions (with discussion), {\em J. Royal Stat. Soc.} B, 85, 1357-1391.

\bibitem{HMW18} Hahn P.R., Martin R., Walker S.G. (2018) On recursive
Bayesian predictive distributions, {\em J.A.S.A.}, 113, 1085-1093.

\bibitem{HT} Hastie T., Tibshirani R., Friedman J. (2009) {\em The elements of
statistical learning: Data Mining, Inference, and Prediction}, Springer, New York.

\bibitem{KELB} Kelbert M. (2023) Survey of distances between the most popular distributions, {\em Analytics}, 2, 225-245.

\bibitem{LDM} Li F., Ding P., Mealli F. (2023) Bayesian causal inference: A critical review, {\em Phil. Trans. R. Soc.} A, 381.

\bibitem{PIT1996} Pitman J. (1996) Some developments of the Blackwell-MacQueen urn scheme, {\em Statistics, Probability and Game Theory,
IMS Lect. Notes Mon. Series}, 30, 245-267.

\bibitem{PITYOR} Pitman J., Yor M. (1997) The two-parameter Poisson-Dirichlet distribution derived from a stable subordinator, {\em Ann. Probab.}, 25, 855-900.

\end{thebibliography}
\end{document}